\def\Maketitle{\maketitle}
\def\Appendix{\appendix
	\def\@seccntformat##1{Appendix~\csname the##1\endcsname.~~}}
\theoremstyle{plain}
\newtheorem{theorem}{Theorem}[section]
\newtheorem{lemma}[theorem]{Lemma}
\newtheorem{proposition}[theorem]{Proposition}
\newtheorem{remark}[theorem]{Remark}
\theoremstyle{definition}
\renewenvironment{proof}[1][Proof]{\noindent\textbf{#1. }}{\qed}
\def\XXint#1#2#3{{\setbox0=\hbox{$#1{#2#3}{\int}$}
		\vcenter{\hbox{$#2#3$}}\kern-.5\wd0}}
\begin{document}
\title {Painlev\'{e} IV, bi-confluent Heun equations and the Hankel determinant generated by a discontinuous semi-classical Laguerre weight}
\author{Mengkun Zhu$^{1}$\footnote{zmk@qlu.edu.cn}, Jianduo Yu$^{1}$\footnote{yujd0203@163.com} \\
\small $^{1}$ School of Mathematics and Statistics, Qilu University of Technology (Shandong Academy of Sciences),\\
\small Jinan, 250353, China\\
\small $^{2}$ College of Humanities and Education, Jiangmen Polytechnic, Jiangmen, 529000, China}
\date{}
\Maketitle
\begin{abstract}\vspace*{10pt}
We consider the discontinuous semi-classical Laguerre weight function with a jump
\begin{equation*}
w(x;t,s)={\rm e}^{-x^2+tx}(A+B\theta(x-s)),\quad x\in \mathbf{R}, t,s\geq0,\quad A\geq0,~A+B\geq0,
\end{equation*}
where $\theta(x)$ is 1 for $x > 0$ and 0 otherwise. Based on the ladder operator approach, we obtain some important difference and differential equations about the auxiliary quantities and the recurrence coefficients. By proper tranformation, It is shown that $R_{n}(t,s)$ is related to Painlev\'{e} IV equations and $r_{n}(t,s)$ satisfies the Chazy II equations. With the aid of Dyson's Coulomb fluid approach, we derive the asymptotic expansions for $\alpha_{n}$ and $\beta_{n}$ as $n\rightarrow\infty$. Furthermore, This enables us to obtain the lagre $n$ behavior of the orthogonal polynomials and derive that they satisfy the biconfluent Heun equation. We also consider the Hankel determinant $D_{n}(t,s)$ generated by the discountinuous semi-classical Laguerre weight. We find that the quantity $\sigma_{n}(t,s)$, allied to the logarithmic derivative of $D_{n}(t,s)$, satisfies the Jimbo-Miwa-Okamoto $\sigma$-form of Painlev\'{e} IV.

\end{abstract}

{\bf MSC:}
33C47, 34M55, 35C20, 65Q99 

{\bf Keywords:} Hankel determinants, Orthogonal polynimials, Painlev\'{e} equation, biconfluent Heun equation.

\section {Introduction}
The joint probability density function of a unitary ensemble in random matrix theory has the following form \cite{Mehta}
$$
P(x_{1},x_{2},\cdots,x_{n}):=\frac{1}{n !D_{n}[w_0]}\prod_{1 \leq i<j \leq n}\left(x_i-x_j\right)^2 \prod_{k=1}^n w_0\left(x_k\right) d x_k,
$$
where $w_0(x)$ is a weight function supported on an interval $[a, b]$, $\left\{x_j\right\}_{j=1}^n$ are the eigenvalues of $n \times n$ Hermitian matrices and $D_{n}[w_0]$ is the normalization constant
$$
D_n\left[w_0\right]:=\frac{1}{n !} \int_{[a, b]^n} \prod_{1 \leq i<j \leq n}\left(x_i-x_j\right)^2 \prod_{k=1}^n w_0\left(x_k\right) d x_k.
$$
The moments of $w_0(x)$ exist, namely,
$$
\mu_j= \int_{a}^{b} x w_0(x)d x, \quad j=0,1,2,\cdots
$$

In this paper, we study the discontinuous semi-classical Laguerre weight function
\begin{equation}\label{1.1}
w(x;t,s)=w_{0}(x)(A+B\theta(x-s)),
\end{equation}
where $x\in \mathbf{R}, t, s\geq0, A\geq0,~A+B\geq0$ and
\begin{equation*}
w_{0}(x)=\mathrm{e}^{-x^2+tx}.
\end{equation*}
More normally, we have
$$
D_n[w]:=\frac{1}{n !} \int_{\mathbf{R}^n} \prod_{1 \leq i<j \leq n}\left(x_i-x_j\right)^2 \prod_{k=1}^n w\left(x_k\right) d x_k,
$$
which is the Hankel determinant generated by the weight (\ref{1.1}). From \cite{Ismail}, it is known that $D_n[w]$ has another representations as follows:
\begin{align}\label{1.7}
D_{n}(t,s)=\prod_{j=0}^{n-1}h_{j}(t,s),
\end{align}
where $h_{j}(t,s)$ is the square of $L^2$ norm of the monic polynomials $P_{j}(x;t,s)$.
It is easy to see
\begin{align}\label{1.8}
\mathrm{ln}D_{n}(t,s)=\sum_{j=0}^{n-1}\mathrm{ln}h_{j}(t,s).
\end{align}
The inspiration of this paper originates from the work of \cite{CP2005}, in which they considered the Hankel determinant generated by the Hermite weight with one jump. Furthermore, the Hankel determinant for the Laguerre weight and Jacobi weight with a single jump were studied \cite{BC,CZ}. Min and Chen \cite{MC2019} investigated the Hankel determinants generated by a discontinuous Gaussian weight
$$
w(x; t_{1},t_{2}) =\mathrm{e}^{-x^2}(A+B_1 \theta (x-t_{1})+B_2 \theta (x-t_{2})).
$$
Different from \cite{CP2005}, \cite{MC2019} considered two general cases for the Gaussian weight: single jump($B_2=0$) and two jumps. By
using the ladder operator approach, they obtain difference and differential equations to describe the Hankel determinant, which include the Chazy II equation, continuous and discrete Painlev\'{e} IV for the single jump case, and showed that a quantity related to the Hankel determinant satisfies a two
variables generalization of the Jimbo-Miwa-Okamoto $\sigma$-form of the Painlev\'{e} IV with two jumps. Lyu and Chen \cite{LC2020} utilized the results about two jumps in \cite{MC2019} to derive the four auxiliary quantities allied with the orthogonal polynomials satisfy a coupled Painlev\'{e} IV system and the asymptotics for the recurrence coefficients which are connected with the solutions of the coupled Painlev\'{e} II system. Zhu, Wang and Chen \cite{WZC} studied a discontinuous linear statistic of the semi-classical Laguerre unitary ensembles
$$
w(x; t) = A\theta (x-t)\mathrm{e}^{-x^2+tx}.
$$
They obtained an auxiliary quantity satisfies a particular Painlev\'{e} IV equation and when $n$ gets large, the second order linear differential equation satisfied by
the orthogonal polynomials become a particular bi-confluent Heun equation.

Let the monic polynomials $P_{n}(x;t,s)$ of degree $n$ orthogonal with respect to the weight function $w(x, t,s)$, that is,
\begin{equation}\label{1.44}
\int^{\infty}_{0}P_{i}(x;t,s)P_{j}(x;t,s)\exp(-x^2+tx)(A+B\theta(x-s))dx=h_{i}(t,s)\delta_{ij},\quad i,j=0,1,2,\cdots,
\end{equation}
and $P_{n}(x;t,s)$ have the monomial expansion
\begin{equation}\label{1.55}
P_{n}(x;t,s)=x^n+\mathrm{p}(n,t,s)x^{n-1}+\cdots+P_{n}(0;t,s).
\end{equation}
From the orthogonality condition (\ref{1.44}), we have the three-term recurrence relation \cite{Szeg}
\begin{equation}\label{1.2}
xP_n(x;t,s)=P_{n+1}(x;t,s)+\alpha_nP_{n}(x;t,s)+\beta_nP_{n-1}(x;t,s),
\end{equation}
with the initial conditions
\begin{equation*}
P_0(x;t,s)=1,\quad \beta_0P_{-1}(x;t,s)=0.
\end{equation*}
Substituting (\ref{1.55}) into (\ref{1.2}) produces
\begin{align}\label{1.3}
\alpha_{n}(t,s)&=\mathrm{p}(n,t,s)-\mathrm{p}(n+1,t,s).
\end{align}
A telescopic sum of (\ref{1.3}) gives
\begin{align}\label{1.5}
\sum_{k=0}^{n-1}\alpha_{k}(t,s)&=-\mathrm{p}(n,t,s).
\end{align}
From (\ref{1.44}) and (\ref{1.2}), we have
\begin{align}\label{1.4}
\beta_{n}(t,s)&=\frac{h_{n}(t,s)}{h_{n-1}(t,s)}.
\end{align}
This paper is organized as follows: In section 2, using the ladder operators and Its compatibility conditions, we derive that the auxiliary quantity $R_{n}(t,s)$ satisfies the second order difference equation. In section 3, we consider the evolutions of the auxiliary quantities in $t$ and $s$. It is found that $R_{n}(t,s)$ and $r_{n}(t,s)$ satisfy the coupled Riccati equations, which composed of four equations and they also satisfy the second order partial differential equations, which can be transformed into Painlev\'{e} IV and Chazy II equations, respectively. In section 4, the difference equation and partial differential equation satisfy by the logarithmic derivative of Hankel determinant are derived, from which we find that they relate to discrete $\sigma$-form of the Painlev\'{e} IV equation and Jimbo-Miwa-Okamoto $\sigma$-form of the Painlev\'{e} IV equation, respectively. In section 5, using Dyson's Coulomb fluid approach, we study the asymptotic expansions of recurrence coefficients as $n\rightarrow\infty$. Furthermore, we find that the monic orthogonal polynomial $P_{n}(x; t,s)$ satisfies the biconfluent Heun equation as $n$ goes to infinity in section 5.

For convenience, we shall not display the $t$ and $s$ dependence in $P_{n}(x), w(x), h_{n}, \alpha_{n}$ and $\beta_{n}$ unless it is needed in the following discussions.

\section{Nonlinear difference equations and differential equations}\label{IM}
The ladder operators and Its compatibility conditions are very useful to study the recurrence coefficients of the orthogonal polynomials and the related Hankel determinants. See \cite{CF1,LCX,MC2019,MC2020} for references. From the works about discontinuous weights with one or two jumps, we have the following lemmas about lowering ladder operator for our monic orthogonal polynomial:

\begin{lemma}\label{1}
The monic orthogonal polynomials $P_{n}(x)$ with respect to the weight $w(x)$ satisfy the following recurrence relation:
\begin{gather*}
P'_{n}(x)=\beta_{n}A_{n}(x)P_{n-1}(x)-B_{n}(x)P_{n}(x),
\end{gather*}
where
\begin{equation}\label{2.1}
\begin{aligned}
A_{n}(x):&=\frac{BP^{2}_{n}(s,s,t)\mathrm{e}^{-s^2+ts}}{h_{n}(s,t)(x-s)}+\frac{1}{h_{n}(s,t)}\int^{+\infty}_{-\infty}\frac{v'_{0}(x)-v_{0}'(y)}{x-y}P^{2}_{n}(y)w(y)dy,\\
B_{n}(x):&=\frac{BP_{n}(s,s,t)P_{n-1}(s,s,t)\mathrm{e}^{-s^2+ts}}{h_{n-1}(s,t)(x-s)}+\frac{1}{h_{n-1}(s,t)}\int^{+\infty}_{-\infty}\frac{v_{0}'(x)-v_{0}'(y)}{x-y}P_{n}(y)P_{n-1}(y)w(y)dy,
\end{aligned}
\end{equation}
with $v_{0}(x)=x^2-tx, P_{n}(s,s,t):=P_{n}(x,s,t)|_{x=s}$.

\end{lemma}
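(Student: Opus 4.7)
The plan is to derive the lowering relation in the Chen--Ismail ladder-operator style, with one additional ingredient to handle the jump of $w$ at $x=s$. I start from the fact that $P_n'(x)$ has degree $n-1$, so it expands in the orthogonal basis as
\begin{equation*}
P_n'(x)=\sum_{k=0}^{n-1}\frac{P_k(x)}{h_k}\int_{-\infty}^{\infty}P_n'(y)P_k(y)w(y)\,dy.
\end{equation*}
My first step is to compute the Fourier-type coefficient $\int P_n'(y)P_k(y)w(y)\,dy$ by integration by parts. Because $w(y)=e^{-v_0(y)}(A+B\theta(y-s))$ is only piecewise smooth, I split the integration region at $y=s$, integrate by parts on $(-\infty,s)$ and $(s,\infty)$ separately, and collect the boundary contributions at $y=s^{\pm}$. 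Using $w(s^+)-w(s^-)=Be^{-v_0(s)}$ and noting that $\int P_nP_k'w\,dy=0$ for $k\le n-1$ by orthogonality, the resulting identity is
\begin{equation*}
\int P_n'(y)P_k(y)w(y)\,dy=\int v_0'(y)P_n(y)P_k(y)w(y)\,dy-BP_n(s)P_k(s)e^{-v_0(s)}.
\end{equation*}

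Next I substitute this back into the expansion of $P_n'(x)$ and split the resulting sum into two pieces. For the $v_0'$ piece, I use the standard Chen--Ismail trick: write $v_0'(y)=v_0'(x)-(v_0'(x)-v_0'(y))$ so that the $v_0'(x)$ part is annihilated by orthogonality of $P_n$, while the difference quotient $\frac{v_0'(x)-v_0'(y)}{x-y}$ pairs with the factor $(x-y)$ needed for the Christoffel--Darboux kernel
\begin{equation*}
\sum_{k=0}^{n-1}\frac{P_k(x)P_k(y)}{h_k}=\frac{P_n(x)P_{n-1}(y)-P_{n-1}(x)P_n(y)}{h_{n-1}(x-y)}.
\end{equation*}
Applying this kernel to the resulting double sum splits the output into a $P_n(x)$-term and a $P_{n-1}(x)$-term, whose coefficients are precisely $-B_n^{(0)}(x)$ and $\frac{h_n}{h_{n-1}}A_n^{(0)}(x)=\beta_n A_n^{(0)}(x)$, where $A_n^{(0)},B_n^{(0)}$ are the integral parts of $A_n,B_n$ in \eqref{2.1}.

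For the jump piece I apply Christoffel--Darboux directly at $y=s$:
\begin{equation*}
\sum_{k=0}^{n-1}\frac{P_k(x)P_k(s)}{h_k}=\frac{P_n(x)P_{n-1}(s)-P_{n-1}(x)P_n(s)}{h_{n-1}(x-s)},
\end{equation*}
and multiply by $-BP_n(s)e^{-v_0(s)}$. Regrouping the $P_n(x)$ and $P_{n-1}(x)$ contributions (and using $\beta_n=h_n/h_{n-1}$) exactly produces the extra $\frac{BP_n^2(s)e^{-v_0(s)}}{h_n(x-s)}$ and $\frac{BP_n(s)P_{n-1}(s)e^{-v_0(s)}}{h_{n-1}(x-s)}$ terms displayed in \eqref{2.1}. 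Adding the two pieces yields $P_n'(x)=\beta_n A_n(x)P_{n-1}(x)-B_n(x)P_n(x)$.

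The only delicate step is the bookkeeping in the split integration by parts: one must track the interior boundary values on both sides of $y=s$ and verify that their difference is controlled purely by the jump $Be^{-v_0(s)}$, so that no smooth-part residue survives. Everything else is algebraic reorganization driven by the Christoffel--Darboux identity, so I expect this to be the main (though still routine) obstacle.
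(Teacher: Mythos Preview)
Your argument is correct and is precisely the standard Chen--Ismail derivation adapted to a weight with a jump: expand $P_n'$ in the orthogonal basis, integrate by parts across the discontinuity to pick up the boundary term $-BP_n(s)P_k(s)e^{-v_0(s)}$, then collapse the sum with Christoffel--Darboux. The paper itself does not prove this lemma; it simply quotes it from the earlier works on discontinuous weights (e.g.\ Chen--Pruessner, Min--Chen), where exactly your computation appears. So your proposal supplies the omitted details rather than offering an alternative route.
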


Due to the discontinuity of $\theta(x-s)$ at $s$, $A_{n}(x)$ and $B_{n}(x)$ have apparent poles at $s$ with residues $R_{n}$ and $r_{n}$, respectively.
\begin{lemma}\label{2}The monic orthogonal polynomials $P_{n}(x)$ satisfy the following second order differential equation
\begin{equation}\label{2.555}
\begin{aligned}
&P''_{n}(x)-\bigg(\frac{A'_{n}(x)}{A_{n}(x)}+v'_{0}(x)\bigg)P_{n}'(x)+\bigg[B'_{n}(x)-\frac{A'_{n}(x)B_{n}(x)}{A_{n}(z)}+\sum_{j=0}^{n-1}A_{j}(x)\bigg]P_{n}(x)=0.
\end{aligned}
\end{equation}
\end{lemma}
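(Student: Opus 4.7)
The plan is to differentiate the lowering relation of Lemma~\ref{1} once, and then eliminate both $P_{n-1}$ and $P_{n-1}'$ from the resulting expression so that only $P_n$, $P_n'$ and $P_n''$ appear. Differentiating
\[
P_n'(x)=\beta_n A_n(x) P_{n-1}(x) - B_n(x) P_n(x)
\]
gives
\[
P_n''(x) = \beta_n A_n'(x) P_{n-1}(x) + \beta_n A_n(x) P_{n-1}'(x) - B_n'(x) P_n(x) - B_n(x) P_n'(x),
\]
and the lowering relation itself supplies $\beta_n A_n(x) P_{n-1}(x) = P_n'(x) + B_n(x) P_n(x)$, which takes care of the term $\beta_n A_n' P_{n-1}$ (producing the $A_n'/A_n$ factor that appears in the target equation).

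The key step is to produce a \emph{raising} relation expressing $P_{n-1}'$ as a linear combination of $P_n$ and $P_{n-1}$. I would obtain it by writing the lowering relation at level $n-1$, namely $P_{n-1}'=\beta_{n-1}A_{n-1}P_{n-2}-B_{n-1}P_{n-1}$, and then eliminating $P_{n-2}$ with the three-term recurrence (\ref{1.2}), which gives
\[
P_{n-1}'(x) = -A_{n-1}(x) P_n(x) + \bigl[(x-\alpha_{n-1})A_{n-1}(x) - B_{n-1}(x)\bigr] P_{n-1}(x).
\]
The bracketed coefficient is simplified using the first ladder-operator compatibility condition
\[
B_n(x) + B_{n-1}(x) = (x-\alpha_{n-1}) A_{n-1}(x) - v_0'(x),
\]
which is proved in the standard way by reading off the pole-residue structure of $B_n+B_{n-1}$ from (\ref{2.1}) (the jump and smooth parts match when the three-term recurrence is inserted). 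This delivers $P_{n-1}' = -A_{n-1}P_n + (B_n + v_0') P_{n-1}$.

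Substituting the raising relation into the expression for $P_n''$ and collecting the coefficients of $P_n'$ and $P_n$ produces
\[
P_n'' - \Bigl(\tfrac{A_n'}{A_n}+v_0'\Bigr) P_n' + \Bigl[ B_n' - \tfrac{A_n' B_n}{A_n} + \beta_n A_n A_{n-1} - B_n^2 - v_0' B_n \Bigr] P_n = 0 .
\]
To finish I invoke the second (telescoped) compatibility identity
\[
\beta_n A_n(x) A_{n-1}(x) - B_n^2(x) - v_0'(x) B_n(x) = \sum_{j=0}^{n-1} A_j(x),
\]
which is the standard sum rule $(S_2')$ derived by combining $(S_1)$ above with the difference relation $1+(x-\alpha_n)(B_{n+1}-B_n) = \beta_{n+1}A_{n+1} - \beta_n A_{n-1}$ and summing from $0$ to $n-1$, the initial term $B_0\equiv 0$ providing the base case. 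Plugging this into the bracket yields exactly (\ref{2.555}).

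The only real obstacle is justifying the compatibility conditions carefully in the presence of the jump at $x=s$: the coefficients $A_n$, $B_n$ in (\ref{2.1}) now have simple poles at $s$ with residues $R_n,r_n$, so each identity $(S_1)$, $(S_2)$, $(S_2')$ must be checked on the polar part as well as on the regular part. The regular part is the usual Laguerre/Hermite-type derivation, while the polar part reduces to algebraic identities among $R_n$, $r_n$, $P_n(s)$, $P_{n-1}(s)$ that follow from the explicit residue formulas; this verification is what makes the lemma non-routine.
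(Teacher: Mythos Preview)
Your derivation is correct and is exactly the standard ladder-operator argument: differentiate the lowering relation, build the raising relation $P_{n-1}'=-A_{n-1}P_n+(B_n+v_0')P_{n-1}$ from the shifted $(S_1)$ and the three-term recurrence, substitute, and replace $\beta_n A_nA_{n-1}-B_n^2-v_0'B_n$ by $\sum_{j=0}^{n-1}A_j$ via $(S_2')$.

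Note, however, that the paper does \emph{not} supply its own proof of this lemma. Lemma~\ref{2}, together with Lemma~\ref{1} and the compatibility conditions (\ref{2.4a})--(\ref{2.4c}), is quoted as a known result from the literature on ladder operators for discontinuous weights, with the citation ``See \cite{CF1,LCX,MC2019,MC2020} for references.'' So there is nothing in the paper to compare your argument against; what you have written is precisely the proof one finds in those cited sources. Your closing remark about carefully verifying $(S_1)$, $(S_2)$, $(S_2')$ on the polar part at $x=s$ is well taken, but in the paper's logic those identities are themselves imported wholesale (equations (\ref{2.4a})--(\ref{2.4c})), so within the paper no such verification is needed to reach (\ref{2.555}).
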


The functions $A_{n}(x)$ and $B_{n}(x)$ satisfy the following compatibility conditions:

\begin{subequations}
\begin{align}
&B_{n+1}(x)+B_{n}(x)=(x-\alpha_{n})A_{n}(x)-v_{0}'(x),\label{2.4a}\\
&1+(x-\alpha_{n})(B_{n+1}(x)-B_{n}(x))=\beta_{n+1}A_{n+1}(x)-\beta_{n}A_{n-1}(x),\label{2.4b}\\
&B^{2}_{n}(x)+v_{0}'(x)B_{n}(x)+\sum ^{n-1}_{j=0}A_{j}(x)=\beta_{n}A_{n}(x)A_{n-1}(x).\label{2.4c}
\end{align}
\end{subequations}

From these compatibility conditions, we can determine the important identity equations immediately about the recurrence coefficients $\alpha_{n}$, $\beta_{n}$ and other auxiliary quantities.

For our problem, we have
\begin{align*}
 v_{0}'(x)=2x-t,
\end{align*}and
\begin{align*}
\frac{v_{0}'(x)-v_{0}'(y)}{x-y}=2.
\end{align*}
Substituting the above equation into the definition of $A_{n}(x)$ and $B_{n}(x)$ in \eqref{2.1}, we have
\begin{subequations}
\begin{align}
A_{n}(x)&=\frac{R_{n}}{x-s}+2,\label{2.2a}\\
B_{n}(x)&=\frac{r_{n}}{x-s},\label{2.2b}
\end{align}
\end{subequations}
where
\begin{equation*}
\begin{aligned}
R_{n}:=\frac{BP^{2}_{n}(s,s,t)\mathrm{e}^{-s^2+ts}}{h_{n}(s,t)},\quad
r_{n}:=\frac{BP_{n}(s,s,t)P_{n-1}(s,s,t)\mathrm{e}^{-s^2+ts}}{h_{n-1}(s,t)}.
\end{aligned}
\end{equation*}

Plugging \eqref{2.2a} and \eqref{2.2b} into \eqref{2.4a}, we obtain
\begin{subequations}
\begin{align}
r_{n+1}+r_{n}=(s-\alpha_{n})R_{n},\label{2.5a}\\
R_{n}+t=2\alpha_{n}.\label{2.5b}
\end{align}
\end{subequations}
\eqref{2.4b} gives the following two equations:
\begin{subequations}
\begin{align}
r_{n+1}-r_{n}=2\beta_{n+1}-2\beta_{n}-1,\label{2.44a}\\
(\alpha_{n}-s)(r_{n+1}-r_{n})=\beta_{n}R_{n-1}-\beta_{n+1}R_{n+1}.\label{2.44b}
\end{align}
\end{subequations}
From \eqref{2.4c}, we get three identities:
\begin{subequations}
\begin{align}
&r^2_{n}=\beta_{n}R_{n}R_{n-1},\label{2.6a}\\
&\sum ^{n-1}_{j=0}R_{j}+(2s-t)r_{n}=2\beta_{n}(R_{n}+R_{n-1}),\label{2.6b}\\
&r_{n}+n=2\beta_{n}.\label{2.6c}
\end{align}
\end{subequations}
Based on these results, we can obtain some second order nonlinear difference equations satisfied by auxiliary quantities as following:
\begin{theorem}
The auxiliary quantity $R_{n}$ satisfies the following nonlinear difference equation:
\begin{equation}\label{2.7}
\begin{aligned}
[2R_{n}(2s-t-R_{n})^2-2(R_{n+1}+R_{n-1})(2sR_{n}-tR_{n}-R^2_{n}+2n)+R_{n}R_{n-1}R_{n+1}-4R_{n+1}]^2\\
=R_{n+1}R_{n-1}(R_{n}R_{n-1}+8n)(R_{n}R_{n+1}+8n+8),
\end{aligned}
\end{equation}
with the initial conditions
\begin{equation*}
\begin{aligned}
R_{0}=\frac{B\mathrm{e}^{-s^2+ts}}{h_{0}}, \quad R_{1}=\frac{BP^2_{1}(s;s,t)\mathrm{e}^{-s^2+ts}}{h_{0}}.
\end{aligned}
\end{equation*}
\end{theorem}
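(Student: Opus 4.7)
The plan is to eliminate the auxiliary quantities $r_n$ and $r_{n+1}$ from the compatibility identities governing them and $R_n$. Substituting $\alpha_n=(R_n+t)/2$ from \eqref{2.5b} into \eqref{2.5a}, and using $2\beta_n=r_n+n$ from \eqref{2.6c} inside \eqref{2.6a} at both indices $n$ and $n+1$, I first recast the relevant relations, under the shorthand $u:=2r_n$ and $v:=2r_{n+1}$, as the clean system
\begin{align*}
u + v &= R_n(2s-t-R_n), \\
u^2 &= (u+2n)\,R_n R_{n-1}, \\
v^2 &= (v+2n+2)\,R_n R_{n+1}.
\end{align*}

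Next I would complete the square in the last two identities to obtain
\begin{align*}
(2u - R_n R_{n-1})^2 &= R_n R_{n-1}\bigl(R_n R_{n-1} + 8n\bigr), \\
(2v - R_n R_{n+1})^2 &= R_n R_{n+1}\bigl(R_n R_{n+1} + 8n + 8\bigr).
\end{align*}
Multiplying these, the product of the right-hand sides equals $R_n^{2}\cdot R_{n-1}R_{n+1}(R_nR_{n-1}+8n)(R_nR_{n+1}+8n+8)$, which is $R_n^{2}$ times the target right-hand side of \eqref{2.7}. It therefore suffices to verify the single identity
\[
(2u - R_n R_{n-1})(2v - R_n R_{n+1}) = R_n\,\mathcal{E}_n,
\]
where $\mathcal{E}_n$ denotes the bracket on the left-hand side of \eqref{2.7}; squaring and dividing by $R_n^{2}$ then yields the theorem.

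The main computation is the closed-form evaluation of this product in terms of $R_{n-1},R_n,R_{n+1},s,t$ alone. Writing $4uv = 2(u+v)^2 - 2(u^2+v^2)$ and substituting $u^2+v^2 = R_n(uR_{n-1}+vR_{n+1}) + 2nR_nR_{n-1} + (2n+2)R_nR_{n+1}$ via the two quadratic relations, the cross-products collapse symmetrically as
\[
(uR_{n-1}+vR_{n+1}) + (uR_{n+1}+vR_{n-1}) = (u+v)(R_{n-1}+R_{n+1}),
\]
so that replacing $u+v$ by $R_n(2s-t-R_n)$ eliminates $u$ and $v$ completely and factors out an overall $R_n$.

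The main obstacle will be the careful bookkeeping of the asymmetric constants created by the $8n$ versus $8n+8$ mismatch: after the manipulations above, the residual constant contribution is $-4nR_{n-1}-(4n+4)R_{n+1}$, which must be split as $-4n(R_{n-1}+R_{n+1})-4R_{n+1}$. The first piece then merges with the $-2R_n(2s-t-R_n)(R_{n-1}+R_{n+1})$ contribution to produce the compact factor $-2(R_{n+1}+R_{n-1})(2sR_n-tR_n-R_n^{2}+2n)$ of \eqref{2.7}, while the orphan $-4R_{n+1}$ remains as the isolated term in the bracket; adding the $R_nR_{n-1}R_{n+1}$ piece completes the identification with $R_n\,\mathcal{E}_n$, and the proof is done.
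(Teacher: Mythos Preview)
Your argument is correct and is essentially the same elimination the paper performs: both proofs start from the three relations $u+v=R_n(2s-t-R_n)$, $u^2=(u+2n)R_nR_{n-1}$, $v^2=(v+2n+2)R_nR_{n+1}$ (with $u=2r_n$, $v=2r_{n+1}$) and eliminate $u,v$. The only stylistic difference is that the paper solves the two quadratics explicitly with square roots and then clears them from \eqref{2.9}, whereas you complete the square in each quadratic and multiply, which packages the same computation without ever writing a radical; your bookkeeping of the $-4R_{n+1}$ orphan and the identification with $R_n\mathcal{E}_n$ checks out line by line.
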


\begin{proof}
Replacing $\alpha_{n}$ in \eqref{2.5a} by \eqref{2.5b}, we have
\begin{equation}\label{2.9}
\begin{aligned}
2(r_{n}+r_{n+1})=(2s-t-R_{n})R_{n}.
\end{aligned}
\end{equation}
Eliminating $\beta_{n}$ from \eqref{2.6a} by using \eqref{2.6c} yields
\begin{equation*}
\begin{aligned}
2r^2_{n}=(r_{n}+n)R_{n}R_{n-1}.
\end{aligned}
\end{equation*}
Solving for $r_{n}$ from the above equation produces
\begin{equation}\label{2.10}
\begin{aligned}
r_{n}=\frac{1}{4}[R_{n}R_{n-1}\pm\sqrt{R_{n}R_{n-1}(R_{n}R_{n-1}+8n)}].
\end{aligned}
\end{equation}
Inserting either solution into \eqref{2.9}, we obtain \eqref{2.7} after removing the square root.

\end{proof}

\section{$s$ evolution and $t$ evolution}\label{IM}
 Because of the quantities, such as the recurrence coefficients $\alpha_{n}, \beta_{n}$ and the auxiliary quantities $R_{n}, r_{n}$, depend on $s$ and $t$, we devote to studying the evolution of all the quantities in $s$ and $t$.
We begin with considering the differentiation of
\begin{equation*}
\int^{\infty}_{0}P^{2}_{n}(x)\exp(-x^2+tx)(A+B\theta(x-s))dx=h_{n},
\end{equation*}
over $s$ and $t$, and we obtain
\begin{equation}\label{3.1}
\begin{aligned}
\partial_{s}\mathrm{ln}h_{n}=-R_{n},\\
\partial_{t}\mathrm{ln}h_{n}=\alpha_{n},
\end{aligned}
\end{equation}
where we used the recurrence relation \eqref{1.2} to obtain the second equation in \eqref{3.1}.

Using \eqref{1.4} and \eqref{3.1}, we have
\begin{equation*}
\begin{aligned}
\partial_{s}\mathrm{ln}\beta_{n}=\partial_{s}\mathrm{ln}h_{n}-\partial_{s}\mathrm{ln}h_{n-1}=R_{n-1}-R_{n},\\
\partial_{t}\mathrm{ln}\beta_{n}=\partial_{t}\mathrm{ln}h_{n}-\partial_{t}\mathrm{ln}h_{n}=\alpha_{n}-\alpha_{n-1}.
\end{aligned}
\end{equation*}

Computing the above equations yield
\begin{equation}\label{3.2}
\begin{aligned}
\partial_{s}\beta_{n}=\beta_{n}(R_{n-1}-R_{n}),
\end{aligned}
\end{equation}
\begin{equation}\label{3.3}
\begin{aligned}
\partial_{t}\beta_{n}=\beta_{n}(\alpha_{n}-\alpha_{n-1}).
\end{aligned}
\end{equation}

Taking a derivative with respect to $s$ in the equation
\begin{equation}\label{3.4}
\int^{\infty}_{0}P_{n}(x)P_{n-1}(x)\exp(-x^2+tx)(A+B\theta(x-s))dx=0,
\end{equation}
we have
\begin{equation}\label{3.5}
\begin{aligned}
\partial_{s}\mathrm{p}(n,t)=\frac{\mathrm{exp}\left(-s^2+ts\right)P_{n}(s)P_{n-1}(s)}{h_{n-1}}=r_{n}(s,t).
\end{aligned}
\end{equation}
Taking a derivative with respect to $t$ in \eqref{3.4} and using \eqref{1.2} produce
\begin{equation}\label{3.6}
\begin{aligned}
\partial_{t}\mathrm{p}(n,t)=-\beta_{n}.
\end{aligned}
\end{equation}

Now we have the analogs of Toda equations on $\alpha_{n}$ and $\beta_{n}$.
\begin{proposition}
\begin{equation}\label{3.7}
\begin{aligned}
\partial_{s}\beta_{n}=2\beta_{n}(\alpha_{n-1}-\alpha_{n}),
\end{aligned}
\end{equation}
\begin{equation}\label{3.8}
\begin{aligned}
\partial_{t}\beta_{n}=\beta_{n}(\alpha_{n}-\alpha_{n-1}),
\end{aligned}
\end{equation}
\begin{equation}\label{3.9}
\begin{aligned}
\partial_{s}\alpha_{n}=2\beta_{n}-2\beta_{n+1}+1,
\end{aligned}
\end{equation}
\begin{equation}\label{3.10}
\begin{aligned}
\partial_{t}\alpha_{n}=\beta_{n+1}-\beta_{n}.
\end{aligned}
\end{equation}
\end{proposition}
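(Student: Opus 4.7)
The plan is to derive each of the four equations (3.7)--(3.10) by combining the $s$- and $t$-derivatives of $h_n$ and $\mathrm{p}(n,t,s)$ already computed just above the proposition with the purely algebraic identities (2.5b) and (2.44a) obtained in Section 2 from the compatibility conditions.

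First I would observe that (3.8) is literally the second line of (3.3), so it requires no further argument. For (3.7), I start from (3.2), namely $\partial_s\beta_n = \beta_n(R_{n-1}-R_n)$, and invoke the algebraic identity (2.5b) at indices $n$ and $n-1$. Since $R_n = 2\alpha_n - t$ and $R_{n-1} = 2\alpha_{n-1} - t$, the $t$'s cancel and $R_{n-1}-R_n = 2(\alpha_{n-1}-\alpha_n)$, which upon substitution into (3.2) gives (3.7).

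For (3.9), the key is to differentiate the telescoping relation (1.3), $\alpha_n = \mathrm{p}(n,t,s) - \mathrm{p}(n+1,t,s)$, with respect to $s$ and apply (3.5) to each term, yielding $\partial_s\alpha_n = r_n - r_{n+1}$. Then (2.44a), $r_{n+1}-r_n = 2\beta_{n+1}-2\beta_n-1$, rearranges to exactly (3.9). Analogously, for (3.10), I differentiate (1.3) with respect to $t$ and use (3.6) to obtain $\partial_t\alpha_n = -\beta_n - (-\beta_{n+1}) = \beta_{n+1} - \beta_n$.

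The proof is essentially bookkeeping: none of the four identities requires any genuinely new calculation beyond what has already been assembled. The only point that requires a little care is (3.7), where one must remember to feed the algebraic identity (2.5b) into the derivative formula (3.2); this is the closest thing to an obstacle, but it is a one-line substitution rather than a real difficulty.
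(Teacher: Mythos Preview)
Your proposal is correct and follows essentially the same approach as the paper. The only cosmetic difference is in the derivation of (3.9): you invoke (2.44a) directly, whereas the paper cites (2.6c) (from which (2.44a) follows by differencing at consecutive indices); both yield $r_n-r_{n+1}=2\beta_n-2\beta_{n+1}+1$ immediately, so this is not a substantive distinction.
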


\begin{proof}
Equation \eqref{3.7} comes from \eqref{2.5b} and \eqref{3.2}. { \eqref{3.8} is \eqref{3.3}}. Taking a derivative with respect to $s$ and $t$ in \eqref{1.3}, we find
\begin{equation}\label{3.111}
\begin{aligned}
\partial_{s}\alpha_{n}=\partial_{s}\mathrm{p}(n)-\partial_{s}\mathrm{p}(n+1),
\end{aligned}
\end{equation}
\begin{equation}\label{3.112}
\begin{aligned}
\partial_{t}\alpha_{n}=\partial_{t}\mathrm{p}(n)-\partial_{t}\mathrm{p}(n+1).
\end{aligned}
\end{equation}
Inserting \eqref{3.5} into \eqref{3.111} and using \eqref{2.6c} to eliminate $r_{n}$, we obtain \eqref{3.9}. The combination of \eqref{3.6} and \eqref{3.112} results in \eqref{3.10}.
The following lemma is very crucial to obtain the Painlev\'{e} equations by derivation of the second-order differential equations satisfied by $R_{n}$ and $r_{n}$.
\end{proof}
\begin{lemma}
The quantities $R_{n}$ and $r_{n}$ satisfy the analogs Riccati equations:
\begin{equation}\label{3.11}
\begin{aligned}
\partial_{s}r_{n}=\frac{2r^{2}_{n}}{R_{n}}-R_{n}(t)(r_{n}+n),
\end{aligned}
\end{equation}
\begin{equation}\label{3.12}
\begin{aligned}
\partial_{t}r_{n}=-\frac{r^2_{n}}{R_{n}}+\frac{1}{2}R_{n}(r_{n}+n),
\end{aligned}
\end{equation}
\begin{equation}\label{3.13}
\begin{aligned}
\partial_{s}R_{n}=4r_{n}-R_{n}(2s-t-R_{n}),
\end{aligned}
\end{equation}
\begin{equation}\label{3.14}
\begin{aligned}
\partial_{t}R_{n}=\frac{1}{2}R_{n}(2s-t-R_{n})-2r_{n}.
\end{aligned}
\end{equation}
\end{lemma}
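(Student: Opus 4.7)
The plan is to reduce each of the four equations to a short algebraic manipulation that uses only the identities already derived: the compatibility identities \eqref{2.5a}--\eqref{2.6c}, and the evolution identities for $\alpha_{n}$ and $\beta_{n}$ in Proposition~3.1. The key observation is that $R_{n}$ and $r_{n}$ are linearly tied to $\alpha_{n}$ and $\beta_{n}$ via
\begin{equation*}
R_{n}=2\alpha_{n}-t,\qquad r_{n}=2\beta_{n}-n,
\end{equation*}
i.e., relations \eqref{2.5b} and \eqref{2.6c}. So $s$- and $t$-derivatives of $R_{n},r_{n}$ reduce immediately to $s$- and $t$-derivatives of $\alpha_{n},\beta_{n}$, which are given by \eqref{3.7}--\eqref{3.10}.

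Carrying this out for \eqref{3.13}, differentiating $R_{n}=2\alpha_{n}-t$ in $s$ and substituting \eqref{3.9} gives $\partial_{s}R_{n}=4\beta_{n}-4\beta_{n+1}+2$; applying $2\beta_{n}=r_{n}+n$ twice turns this into $2r_{n}-2r_{n+1}$. The recurrence \eqref{2.5a} then eliminates $r_{n+1}$ in favor of $r_{n}$ and $R_{n}$, and using $2(s-\alpha_{n})=2s-t-R_{n}$ produces exactly \eqref{3.13}. Equation \eqref{3.14} comes from the same calculation but in the $t$-variable: $\partial_{t}R_{n}=2\partial_{t}\alpha_{n}-1=r_{n+1}-r_{n}$, then again eliminate $r_{n+1}$ via \eqref{2.5a}.

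For \eqref{3.11} and \eqref{3.12}, I differentiate $r_{n}=2\beta_{n}-n$ and invoke \eqref{3.7}--\eqref{3.8} to get $\partial_{s}r_{n}=2\beta_{n}(R_{n-1}-R_{n})$ and $\partial_{t}r_{n}=\beta_{n}(\alpha_{n}-\alpha_{n-1})\cdot 2=\beta_{n}(R_{n}-R_{n-1})$ (the last step using $\alpha_{k}=(R_{k}+t)/2$). Both expressions contain $R_{n-1}$, which is the only quantity that needs to be eliminated. For this I use \eqref{2.6a}--\eqref{2.6c} in the combined form
\begin{equation*}
R_{n-1}=\frac{2r_{n}^{2}}{(r_{n}+n)R_{n}},
\end{equation*}
and then $(r_{n}+n)R_{n-1}=2r_{n}^{2}/R_{n}$. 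Substituting this and using $2\beta_{n}=r_{n}+n$ one last time yields \eqref{3.11} and \eqref{3.12} on the nose.

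There is no real obstacle; the whole argument is a bookkeeping exercise in which identities \eqref{2.5a}--\eqref{2.6c} and \eqref{3.7}--\eqref{3.10} are combined in the right order. The only point that requires a bit of care is recognising that \eqref{2.6a} together with \eqref{2.6c} is precisely what one needs to remove $R_{n-1}$ from the $r_{n}$-equations, so that the system closes in the two quantities $R_{n}$ and $r_{n}$ (plus the independent variables $s$ and $t$). Once that substitution is in place, the four Riccati relations follow by direct computation.
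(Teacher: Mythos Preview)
Your argument is correct and is essentially the paper's own proof: both derive \eqref{3.11}--\eqref{3.12} by differentiating $r_{n}=2\beta_{n}-n$, rewriting $\partial\beta_{n}$ in terms of $R_{n}$ and $R_{n-1}$, and then eliminating $R_{n-1}$ via \eqref{2.6a}--\eqref{2.6c}; and both derive \eqref{3.13}--\eqref{3.14} by differentiating $R_{n}=2\alpha_{n}-t$ and eliminating $r_{n+1}$ via \eqref{2.5a}--\eqref{2.5b}. The only cosmetic difference is that you invoke the already-packaged Toda equations \eqref{3.7}--\eqref{3.10}, whereas the paper appeals one step further back to \eqref{3.2}, \eqref{3.3}, \eqref{3.5}, \eqref{3.6} directly.
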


\begin{proof}
Getting rid of $\beta_{n}R_{n-1}$ in \eqref{3.2} by using \eqref{2.6a}, we have
\begin{equation}\label{3.15}
\begin{aligned}
\partial_{s}\beta_{n}=\frac{r_{n}^2}{R_{n}}-\beta_{n}R_{n}.
\end{aligned}
\end{equation}
Inserting \eqref{2.6c} into this identity, we arrive at \eqref{3.11}.

Plugging \eqref{2.5b} into \eqref{3.3} to remove $\alpha_{n}$, we have
\begin{equation}\label{3.16}
\begin{aligned}
\partial_{t}\beta_{n}=\frac{1}{2}\beta_{n}(R_{n}-R_{n-1}).
\end{aligned}
\end{equation}
Using \eqref{2.6a} and \eqref{2.6c}, we obtain \eqref{3.12}.

Substituting \eqref{1.3} into \eqref{2.5b}, we find
\begin{equation}\label{3.17}
\begin{aligned}
R_{n}+t=2[\mathrm{p}(n)-\mathrm{p}(n+1)].
\end{aligned}
\end{equation}
Taking a derivative on both sides of \eqref{3.17} with respect to $s$, we have
\begin{equation}\label{3.18}
\begin{aligned}
\partial_{s}R_{n}&=2[\partial_{s}\mathrm{p}(n)-\partial_{s}\mathrm{p}(n+1)]=2(r_{n}-r_{n+1})=2[2r_{n}-R_{n}(s-\alpha_{n})] \\
&=4r_{n}-2R_{n}(s-\frac{1}{2}(t+R_{n}))=4r_{n}-R_{n}(2s-t-R_{n}),
\end{aligned}
\end{equation}
where we used \eqref{3.6}, \eqref{2.5a} and \eqref{2.5b}.

Similarly, Taking a derivative on both sides of \eqref{3.17} with respect to $t$ and using \eqref{3.7} and \eqref{2.6c}, we have
\begin{equation}\label{3.19}
\begin{aligned}
\partial_{t}R_{n}+1&=2(\partial_{t}\mathrm{p}(n)-\partial_{t}\mathrm{p}(n+1))\\
&=2\beta_{n+1}-2\beta_{n}=r_{n+1}-r_{n}+1.
\end{aligned}
\end{equation}
Substituting \eqref{2.5a} into \eqref{3.19} to eliminate $r_{n+1}$ and using \eqref{2.5b} to remove $\alpha_{n}$, we have the desired identity \eqref{3.14}.

\end{proof}

\begin{theorem}
$R_{n}$ and $r_{n}$ satisfy the following second order partial differential equations:
\begin{equation}\label{3.20}
\begin{aligned}
\partial^{2}_{ss}R_{n}=\frac{(\partial_{s}R_{n})^2}{2R_{n}}+\frac{3}{2}R^3_{n}-(4s-2t)R^2_{n}+(2s^2-4n-2-2st+\frac{1}{2}t^2)R_{n},
\end{aligned}
\end{equation}
\begin{equation}\label{3.21}
\begin{aligned}
\partial^{2}_{tt}R_{n}=\frac{(\partial_{t}R_{n})^2}{2R_{n}}+\frac{3}{8}R^3_{n}-(s-\frac{1}{2}t)R^2_{n}+(\frac{1}{2}s^2-n-\frac{1}{2}-\frac{1}{2}st+\frac{1}{8}t^2)R_{n}.
\end{aligned}
\end{equation}
\begin{equation}\label{3.22}
\begin{aligned}
[\partial^{2}_{ss}r_{n}+12r^2_{n}+8nr_{n}]^2=(2s-t)^2[(\partial_{s}r_{n})^2+8r^3_{n}+8nr^2_{n}],
\end{aligned}
\end{equation}
\begin{equation}\label{3.23}
\begin{aligned}
[2\partial^{2}_{tt}r_{n}+6r^2_{n}+4nr_{n}]^2=(2s-t)^2[(\partial_{t}r_{n})^2+2r^3_{n}+2nr^2_{n}],
\end{aligned}
\end{equation}

Moreover, letting ~$S:=\dfrac{1}{2}t-s$, $Y(S):=R_{n}(s,t)$, then $Y(S)$ satisfies the Painlev\'{e} IV equation \cite{Gromak}
\begin{equation}\label{3.24}
\begin{aligned}
\partial^{2}_{SS}Y=\frac{(\partial_{S}Y)^2}{2Y}+\frac{3}{2}Y^3+4SY^2+2(S^2-\gamma_{1})Y+\frac{\eta_{1}}{Y},
\end{aligned}
\end{equation}
with $\gamma_{1}=2n+1$, $\eta_{1}=0$. Letting $T:=\dfrac{1}{2}t-s$, $Y(T):=R_{n}(s,t)$, then $Y(T)$ satisfies the Painlev\'{e} IV equation \cite{Gromak}
\begin{equation}\label{3.25}
\begin{aligned}
\partial^{2}_{TT}Y=\frac{(\partial_{T}Y)^2}{2Y}+\frac{3}{2}Y^3+4TY^2+2(T^2-\gamma_{1})Y+\frac{\eta_{1}}{Y}.
\end{aligned}
\end{equation}
Letting $\widetilde{S}:=s-\dfrac{1}{2}t$, $V(\widetilde{S}):=-2r_{n}(s,t)-\dfrac{2n}{3}$, then $V(\widetilde{S})$ satisfies the Chazy II system \cite{Cosgrove},
\begin{equation}\label{CS}
\begin{aligned}
[\partial^2_{\widetilde{S}\widetilde{S}}V-6V^2-\gamma_{2}]^2=4\widetilde{S}^2[(\partial_{\widetilde{S}}V)^2-4V^3-2\gamma_{2}V-\eta_{2}],
\end{aligned}
\end{equation}
with $\gamma_{2}=-\dfrac{8n^2}{3}$, $\eta_{2}=-\dfrac{64n^3}{27}$. Letting $\widetilde{T}:=s-\dfrac{1}{2}t$, $V(\widetilde{T}):=-2r_{n}(s,t)-\dfrac{2n}{3}$, then $V(\widetilde{T})$ satisfies the Chazy II system \cite{Cosgrove}
\begin{equation}\label{CT}
\begin{aligned}
[\partial^2_{\widetilde{T}\widetilde{T}}V-6V^2-\gamma_{2}]^2=4\widetilde{T}^2[(\partial_{\widetilde{T}}V)^2-4V^3-2\gamma_{2}V-\eta_{2}].
\end{aligned}
\end{equation}

\end{theorem}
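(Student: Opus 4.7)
The plan is to derive the four PDEs (3.20)--(3.23) from the coupled Riccati system (3.11)--(3.14) by differentiating once more and eliminating one member of the pair $(R_n,r_n)$ algebraically, then to recognize the resulting equations as Painlev\'e IV or Chazy II by explicit changes of variables.

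For (3.20), I would differentiate (3.13) with respect to $s$ to obtain
\begin{equation*}
\partial_{ss}R_n = 4\,\partial_s r_n - 2R_n - (\partial_s R_n)(2s-t-2R_n),
\end{equation*}
replace $\partial_s r_n$ by its expression from (3.11), and finally use (3.13) itself in the form $r_n = \tfrac{1}{4}\bigl[\partial_s R_n + R_n(2s-t-R_n)\bigr]$ to eliminate $r_n$ entirely. After expansion the terms linear in $\partial_s R_n$ cancel, leaving exactly (3.20). The derivation of (3.21) via (3.14) and (3.12) is structurally identical; indeed the system (3.11)--(3.14) implies $\partial_t = -\tfrac{1}{2}\partial_s$ on $R_n$ and $r_n$, so (3.21) is obtained from (3.20) by this rescaling.

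The equations (3.22)--(3.23) have the squared structure $[\cdot]^2 = (2s-t)^2[\cdot]$, indicating that a square root has been removed. Here the plan is to differentiate (3.11) once more in $s$, and then substitute (3.11) and (3.13) to eliminate $\partial_s r_n$ and $\partial_s R_n$ from the resulting expression. After regrouping I expect the pivotal identity
\begin{equation*}
\partial_{ss}r_n + 12r_n^2 + 8nr_n \;=\; (2s-t)\bigl[\partial_s r_n + 2(r_n+n)R_n\bigr],
\end{equation*}
with the factor $(2s-t)$ arising from (3.13) (equivalently from $2s-t-R_n = 2(s-\alpha_n)$ via (2.5b)). Squaring both sides and invoking the algebraic identity $(r_n+n)R_n^2 = 2r_n^2 - R_n\,\partial_s r_n$ (i.e., (3.11) rewritten) to collapse the $R_n$-dependent expansion $4(r_n+n)R_n\,\partial_s r_n + 4(r_n+n)^2 R_n^2$ to $8r_n^2(r_n+n)$, one recovers exactly (3.22). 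The parallel derivation for (3.23) uses (3.12) and (3.14).

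For the Painlev\'e IV reduction, under $S=t/2-s$ with $t$ fixed, one has $\partial_s=-\partial_S$ together with the polynomial identities $4s-2t=-4S$ and $2s^2-2st+\tfrac{1}{2}t^2 = 2S^2$; substituting into (3.20) yields (3.24) immediately with $\gamma_1 = 2n+1$ and $\eta_1=0$. For (3.25), take $T=t/2-s$ with $s$ fixed so that $\partial_t = \tfrac{1}{2}\partial_T$, and multiplying (3.21) by $4$ produces (3.25). For the Chazy II reductions, set $\widetilde{S}=s-t/2$ and $V = -2r_n - 2n/3$; a direct polynomial calculation gives $12r_n^2+8nr_n = 3V^2 - \tfrac{4n^2}{3}$ and $8r_n^3+8nr_n^2 = -V^3 + \tfrac{4n^2}{3}V + \tfrac{16n^3}{27}$, whereupon (3.22) rewritten in $(\widetilde{S},V)$ becomes $\tfrac{1}{4}$ of (CS) with $\gamma_2=-\tfrac{8n^2}{3}$ and $\eta_2=-\tfrac{64n^3}{27}$; the passage from (3.23) to (CT) is identical. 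The main obstacle I anticipate is the sign and coefficient bookkeeping in deriving the pivotal identity for $\partial_{ss}r_n$; once that is in hand, the algebraic reductions to the canonical Painlev\'e and Chazy forms are routine.
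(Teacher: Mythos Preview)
Your derivation of (3.20)--(3.21) is essentially the paper's: both routes eliminate $r_n$ between (3.11) and (3.13) (resp.\ (3.12) and (3.14)), and whether one first solves (3.13) for $r_n$ and substitutes into (3.11), or first differentiates (3.13) and then substitutes, is immaterial. Your observation that $\partial_t=-\tfrac12\partial_s$ on $R_n,r_n$ shortcuts the $t$-case nicely; the paper treats the two cases in parallel without noting this.

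For (3.22)--(3.23) your route differs from the paper's. The paper solves the Riccati equation (3.11) as a quadratic in $R_n$, obtaining an explicit square root, substitutes this into (3.13), and then squares to clear the radical. You instead differentiate (3.11), substitute (3.11) and (3.13), and arrive directly at the factored identity
\[
\partial_{ss}r_n+12r_n^2+8nr_n=(2s-t)\Bigl[\tfrac{2r_n^2}{R_n}+R_n(r_n+n)\Bigr]=(2s-t)\bigl[\partial_s r_n+2(r_n+n)R_n\bigr],
\]
then square and use the relation $(A+B)^2=(A-B)^2+4AB$ with $A=2r_n^2/R_n$, $B=R_n(r_n+n)$ (so $A-B=\partial_s r_n$ and $AB=2r_n^2(r_n+n)$) to eliminate $R_n$. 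This is a genuine alternative: it never introduces a square root and does not require checking, as the paper does in its Remark, that both branches of the quadratic lead to the same squared equation. The paper's approach, on the other hand, makes the Chazy structure (two branches collapsing upon squaring) visible from the outset. Your Painlev\'e~IV and Chazy~II changes of variable are the same as the paper's, with the polynomial identities you list being correct.
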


\begin{proof}
Solving $r_{n}$ from \eqref{3.13}, we find
\begin{equation}\label{3.26}
\begin{aligned}
r_{n}=\frac{1}{4}[\partial_{s}R_{n}+(2s-t-R_n)R_n].
\end{aligned}
\end{equation}
Inserting \eqref{3.26} into \eqref{3.11}, the differential equation \eqref{3.20} for $R_{n}$ is obtained.

Expressing $r_{n}$ in terms of $R_{n}$ and $\partial_{t}R_{n}$ by \eqref{3.14} gives us
\begin{equation}\label{3.27}
\begin{aligned}
r_{n}=\frac{1}{4}R_{n}(2s-t-R_n)-\frac{1}{2}\partial_{t}R_{n}.
\end{aligned}
\end{equation}
Plugging \eqref{3.27} into \eqref{3.12}, we are led to \eqref{3.21}.

Letting $s=\frac{1}{2}t-S$, $R(s,t)=Y(S)$ and $t=2T+2s$, $R(s,t)=Y(T)$, substituting them into \eqref{3.20} and \eqref{3.21}, respectively, we obtain $Y(S)$ and $Y(T)$ satisfy the particular Painlev\'{e} IV.

Now, we derive the differential equations for $r_{n}$. Viewing \eqref{3.11} as the quadratic equation on $R_{n}$, we find
\begin{equation}\label{3.28}
\begin{aligned}
R_{n}=\frac{-\partial_{s}r_n\pm\sqrt{(\partial_{s}r_n)^2+8r_{n}^2(n+r_{n})}}{2(n+r_{n})}.
\end{aligned}
\end{equation}
Then, inserting one of the solutions $R_{n}=\frac{-\partial_{s}r_n+\sqrt{(\partial_{s}r_n)^2+8r_{n}^2(n+r_{n})}}{2(n+r_{n})}$ into \eqref{3.13}, we have
\begin{equation}\label{3.29}
\begin{aligned}
\frac{[\partial_{s}r_n-\sqrt{(\partial_{s}r_n)^2+8r_{n}^2(n+r_{n})}][\partial^2_{ss}r_n+12r_{n}^2+8nr_{n}-(2s-t)\sqrt{(\partial_{s}r_n)^2+8r_{n}^2(n+r_{n})}]}{2(n+r_{n})\sqrt{(\partial_{s}r_n)^2+8r_{n}^2(n+r_{n})}}=0.
\end{aligned}
\end{equation}
It follows a second order differential equation for $r_{n}$,
\begin{equation}\label{3.30}
\begin{aligned}
\partial^2_{ss}r_n+12r_{n}^2+8nr_{n}-(2s-t)\sqrt{(\partial_{s}r_n)^2+8r_{n}^2(n+r_{n})}=0.
\end{aligned}
\end{equation}
Clearing the square root, we obtain \eqref{3.22}.

\eqref{3.12} can be seen as the quadratic equation of $R_{n}$,
\begin{equation}\label{3.31}
\begin{aligned}
R_{n}=\frac{\partial_{t}r_n\pm\sqrt{(\partial_{t}r_n)^2+2r_{n}^2(n+r_{n})}}{n+r_{n}}.
\end{aligned}
\end{equation}
Substituting the solution $R_{n}=\dfrac{\partial_{t}r_n+\sqrt{(\partial_{t}r_n)^2+2r_{n}^2(n+r_{n})}}{n+r_{n}}$ into \eqref{3.14}, we have
\begin{equation}\label{3.32}
\begin{aligned}
\frac{[\partial_{t}r_n+\sqrt{(2\partial_{t}r_n)^2+2r_{n}^2(n+r_{n})}][2\partial^2_{tt}r_n+6r_{n}^2+4nr_{n}-(2s-t)\sqrt{(\partial_{t}r_n)^2+2r_{n}^2(n+r_{n})}]}{2(n+r_{n})\sqrt{(\partial_{t}r_n)^2+2r_{n}^2(n+r_{n})}}=0.
\end{aligned}
\end{equation}
Choosing the second order differential equation for $r_{n}$ and removing the square root, we obtain \eqref{3.23}.

Letting $s=\widetilde{S}+\dfrac{1}{2}t$, $r_{n}(s,t)=-\dfrac{1}{2}V(\widetilde{S})-\dfrac{n}{3}$, then $V(\widetilde{S})$ satisfies the Chazy II system \eqref{CS}. Similarly, letting $t=2s-2\widetilde{T}$, $r_{n}(s,t)=-\dfrac{1}{2}V(\widetilde{T})-\dfrac{n}{3}$, then $V(\widetilde{T})$ satisfies the Chazy II system \eqref{CT}.
\end{proof}

\begin{remark}
One finds that substituting the another solution in \eqref{3.28} into \eqref{3.13} also can obtain the desired result. Similarly, plugging the another solution in \eqref{3.31} into \eqref{3.14} can obtain the same result.

\end{remark}

\section{Logarithmic derivative of the Hankel determinant and $\sigma$-form of Panilev\'{e} IV}\label{IM}

We define a quantity related to the logarithmic derivative of the Hankel determinant $D_{n}$,
\begin{equation}\label{4.1}
\begin{aligned}
\sigma_{n}=(\partial_{t}+\partial_{s})\mathrm{ln}D_{n}.
\end{aligned}
\end{equation}
It is easy to find from \eqref{1.8} and \eqref{3.1} that
\begin{equation}\label{4.2}
\begin{aligned}
\sigma_{n}=-\sum\limits_{j=0}^{n-1}R_{j}+\sum\limits_{j=0}^{n-1}\alpha_{j}.
\end{aligned}
\end{equation}
Replacing $n$ by $j$ in \eqref{2.5b} and summing it from $j=0$ to $n-1$, we have
\begin{equation}\label{4.3}
\begin{aligned}
\sum\limits_{j=0}^{n-1}R_{j}=2\sum\limits_{j=0}^{n-1}\alpha_{j}-nt.
\end{aligned}
\end{equation}
Substituting \eqref{4.3} into \eqref{4.2} produces
\begin{equation}\label{4.4}
\begin{aligned}
2\sigma_{n}=-\sum\limits_{j=0}^{n-1}R_{j}+nt.
\end{aligned}
\end{equation}
Then we have
\begin{equation}\label{4.5}
\begin{aligned}
R_{n}=2\sigma_{n}-2\sigma_{n+1}+t.
\end{aligned}
\end{equation}
On the other hand, inserting \eqref{4.3} into \eqref{4.2} to eliminate $\sum\limits_{j=0}^{n-1}R_{j}$, we have
\begin{equation}\label{4.6}
\begin{aligned}
\sigma_{n}=-\sum\limits_{j=0}^{n-1}\alpha_{j}+nt.
\end{aligned}
\end{equation}
In view of \eqref{1.5}, we have
\begin{equation}\label{4.7}
\begin{aligned}
\sigma_{n}=nt+\mathrm{p}.
\end{aligned}
\end{equation}

We establish the following theorem which can be seen as a two variable version of Toda-type equation.
\begin{theorem}
$D_{n}$ satisfies the following second order partial differential equation:
\begin{equation*}
\begin{aligned}
(\partial^2_{tt}+2\partial^2_{st}+\partial^2_{ss})\mathrm{ln}D_{n}=\frac{D_{n+1}D_{n-1}}{(D_{n})^2}.
\end{aligned}
\end{equation*}
\end{theorem}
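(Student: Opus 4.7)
The plan is to recognise that the symmetric second-order operator on the left factors as $(\partial_t + \partial_s)^2$, so that by the very definition of $\sigma_n$ in \eqref{4.1} only a single further application of $\partial_t + \partial_s$ is required. Concretely, since $\ln D_n$ is smooth in $(t,s)$ we have
\begin{equation*}
(\partial^2_{tt} + 2\partial^2_{st} + \partial^2_{ss})\ln D_n = (\partial_t + \partial_s)^2 \ln D_n = (\partial_t + \partial_s)\sigma_n,
\end{equation*}
and the claim reduces to showing $(\partial_t+\partial_s)\sigma_n = D_{n+1}D_{n-1}/D_n^2$.

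For the right-hand side, I would use the product representation \eqref{1.7} together with \eqref{1.4}: telescoping gives $D_{n+1}D_{n-1}/D_n^2 = h_n/h_{n-1} = \beta_n$. So the whole task becomes proving $(\partial_t+\partial_s)\sigma_n = \beta_n$.

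For the left-hand side, I would start from \eqref{4.7}, $\sigma_n = nt + \mathrm{p}(n,t,s)$, and differentiate: $(\partial_t+\partial_s)\sigma_n = n + \partial_t \mathrm{p}(n) + \partial_s \mathrm{p}(n)$. The two derivatives of $\mathrm{p}(n,t,s)$ were computed earlier in \eqref{3.5} and \eqref{3.6} as $\partial_s \mathrm{p}(n) = r_n$ and $\partial_t \mathrm{p}(n) = -\beta_n$. Substituting gives
\begin{equation*}
(\partial_t + \partial_s)\sigma_n = n + r_n - \beta_n,
\end{equation*}
and finally the identity \eqref{2.6c}, namely $r_n + n = 2\beta_n$, collapses this to $\beta_n$, as required.

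There is no serious obstacle; the only point that needs care is that each of the intermediate identities I invoke — the expression \eqref{4.7} for $\sigma_n$, the two $\mathrm{p}(n)$-derivative formulas \eqref{3.5}--\eqref{3.6}, the identity \eqref{2.6c} relating $r_n$ to $\beta_n$, and the telescoping of $h_j$ via \eqref{1.4} and \eqref{1.7} — has already been established in the preceding sections, so the proof is essentially a clean assembly of these ingredients. The conceptual point worth emphasising is that the definition $\sigma_n := (\partial_t+\partial_s)\ln D_n$ is precisely tuned so that acting with $\partial_t+\partial_s$ a second time recovers the symmetric second-order operator on the left, which turns a two-variable Toda-type identity into a short calculation.
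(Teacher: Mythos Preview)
Your proof is correct and follows essentially the same path as the paper: both arguments apply $\partial_t+\partial_s$ to the identity $\sigma_n = nt + \mathrm{p}(n)$, invoke \eqref{3.5}--\eqref{3.6} to obtain $n+r_n-\beta_n$, reduce this to $\beta_n$ via \eqref{2.6c}, and then identify $\beta_n$ with $D_{n+1}D_{n-1}/D_n^2$ through \eqref{1.4} and \eqref{1.7}. Your explicit factorisation $(\partial_t+\partial_s)^2$ is exactly what the paper uses when it writes ``applying $\partial_t+\partial_s$ again to \eqref{4.7}''.
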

\begin{proof}
Applying $\partial_{t}+\partial_{s}$ again to \eqref{4.7}, in view of \eqref{3.5} and \eqref{3.6},we find
 \begin{equation}\label{4.8}
\begin{aligned}
(\partial^2_{tt}+2\partial^2_{st}+\partial^2_{ss})\mathrm{ln}D_{n}=n+r_{n}-\beta_{n}.
\end{aligned}
\end{equation}
Getting rid of $r_{n}$ by using \eqref{2.6c}, \eqref{4.8} becomes
\begin{equation}\label{4.9}
\begin{aligned}
(\partial^2_{tt}+2\partial^2_{st}+\partial^2_{ss})\mathrm{ln}D_{n}=\beta_{n}.
\end{aligned}
\end{equation}
From \eqref{1.7}, we have
\begin{equation}\label{4.10}
\begin{aligned}
h_{n}=\frac{D_{n+1}}{D_{n}}.
\end{aligned}
\end{equation}
With the help of \eqref{1.4} and \eqref{4.10}
\begin{equation}\label{4.11}
\begin{aligned}
\beta_{n}=\frac{D_{n+1}D_{n-1}}{(D_{n})^2}.
\end{aligned}
\end{equation}
Plugging \eqref{4.11} into \eqref{4.9}, we completed the proof.

\end{proof}
\begin{theorem}
$\sigma_{n}$ satisfies the following the second-order difference equation and the second order partial differential equations:
\begin{equation}\label{4.12}
\begin{aligned}
&[nt+2\sigma_{n}+2n(\sigma_{n-1}-\sigma_{n+1})]^2\\
&~~~~~~~~~~~~~~=(2\sigma_{n-1}-2\sigma_{n+1}+3t-2s)(nt-ns-\sigma_{n})(2\sigma_{n}-2\sigma_{n+1}+t)(2\sigma_{n-1}-2\sigma_{n}+t),
\end{aligned}
\end{equation}
\begin{equation}\label{4.13}
\begin{aligned}
(\partial^2_{ss}\sigma_{n})^2=[(2s-t)\partial_{s}\sigma_{n}+nt-2\sigma_{n}]^2-8(\partial_{s}\sigma_{n}+n)(\partial_{s}\sigma_{n})^2,
\end{aligned}
\end{equation}
\begin{equation}\label{4.14}
\begin{aligned}
16(\partial^2_{tt}\sigma_{n})^2=[(2s-t)(n-2\partial_{t}\sigma_{n})+nt-2\sigma_{n}]^2-16(n-\partial_{t}\sigma_{n})(n-2\partial_{t}\sigma_{n})^2.
\end{aligned}
\end{equation}
Letting $\widetilde{S}:=s-\dfrac{1}{2}t, \widetilde{\sigma}_{n}:=2\sigma_{n}-n(2s-2\widetilde{S})$, \eqref{4.12} becomes a discrete $\sigma$-form of the Painlev\'{e} IV equation
\begin{equation}\label{4.15}
\begin{aligned}
2[\widetilde{\sigma}_{n}+n(\widetilde{\sigma}_{n-1}-\widetilde{\sigma}_{n+1})]^2
=(\widetilde{\sigma}_{n+1}-\widetilde{\sigma}_{n-1}+2\widetilde{S})(2n\widetilde{S}+\widetilde{\sigma}_{n})(\widetilde{\sigma}_{n}-\widetilde{\sigma}_{n+1})(\widetilde{\sigma}_{n-1}-\widetilde{\sigma}_{n}).
\end{aligned}
\end{equation}
and \eqref{4.13} becomes a Jimbo-Miwa-Okamoto $\sigma$-form of the Painlev\'{e} IV equation \cite{Jimbo}
\begin{equation}\label{4.16}
\begin{aligned}
(\partial^2_{\widetilde{S}\widetilde{S}}\widetilde{\sigma}_{n})^2=4(\widetilde{S}\partial_{\widetilde{S}}\widetilde{\sigma}_{n}-\widetilde{\sigma}_{n})^2
-4(\partial_{\widetilde{S}}\widetilde{\sigma}_{n})^2(\partial_{\widetilde{S}}\widetilde{\sigma}_{n}+2n).
\end{aligned}
\end{equation}
Letting $\widetilde{T}:=s-\dfrac{1}{2}t, \widehat{\sigma}_{n}:=2\sigma_{n}-n(2s-2\widetilde{T})$, \eqref{4.14} becomes a Jimbo-Miwa-Okamoto $\sigma$-form of the Painlev\'{e} IV equation \cite{Jimbo}
\begin{equation}\label{4.17}
\begin{aligned}
(\partial^2_{\widetilde{T}\widetilde{T}}\widehat{\sigma}_{n})^2=4(\widetilde{T}\partial_{T}\widehat{\sigma}_{n}-\widehat{\sigma}_{n})^2
-4(\partial_{\widetilde{T}}\widehat{\sigma}_{n})^2(\partial_{\widetilde{T}}\widehat{\sigma}_{n}+2n).
\end{aligned}
\end{equation}
\end{theorem}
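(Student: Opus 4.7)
The whole theorem rests on four facts linking $\sigma_n$ to the auxiliary quantities already controlled. From \eqref{4.7} together with \eqref{3.5} and \eqref{3.6} I would read off
\begin{equation*}
r_n=\partial_s\sigma_n,\qquad \beta_n=n-\partial_t\sigma_n,\qquad r_n=n-2\partial_t\sigma_n,
\end{equation*}
the last following from \eqref{2.6c}; together with \eqref{4.5} and $R_{n-1}=2(\sigma_{n-1}-\sigma_n)+t$, each of $r_n,R_n,R_{n-1},\beta_n$ is then expressed in terms of $\sigma_n$ and at most one partial derivative or index shift. The one algebraic identity used throughout, which I would prove once and then invoke repeatedly, is the ``master identity''
\begin{equation*}
(r_n+n)(R_n+R_{n-1})=nt-2\sigma_n+(2s-t)\,r_n,
\end{equation*}
immediate from \eqref{2.6b}, \eqref{2.6c} and \eqref{4.4}.

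For \eqref{4.13} I would view the Riccati equation \eqref{3.11} as a quadratic in $R_n$, which forces
\begin{equation*}
\bigl[2(r_n+n)R_n+\partial_s r_n\bigr]^{2}=(\partial_s r_n)^{2}+8r_n^{2}(r_n+n).
\end{equation*}
A short manipulation using \eqref{2.6a} and \eqref{2.6c} rewrites the bracket as $(r_n+n)(R_n+R_{n-1})$, which by the master identity equals $nt-2\sigma_n+(2s-t)r_n$; substituting $r_n=\partial_s\sigma_n$ and $\partial_s r_n=\partial^2_{ss}\sigma_n$ and rearranging yields \eqref{4.13}. The derivation of \eqref{4.14} is strictly parallel, starting from \eqref{3.12}: the analogous algebraic consequence is $[(r_n+n)R_n-\partial_t r_n]^{2}=(\partial_t r_n)^{2}+2r_n^{2}(r_n+n)$, the bracket now simplifies to $\tfrac12(r_n+n)(R_n+R_{n-1})$, and the substitution $r_n=n-2\partial_t\sigma_n,\ \partial_t r_n=-2\partial^2_{tt}\sigma_n$ delivers \eqref{4.14}.

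The difference equation \eqref{4.12} comes from equating two independent expressions for $r_n$. Solving the master identity linearly in $r_n$ gives
\begin{equation*}
r_n=\frac{nt-2\sigma_n-n(R_n+R_{n-1})}{R_n+R_{n-1}+t-2s},
\end{equation*}
while \eqref{2.6a} combined with \eqref{2.6c} yields the separate quadratic relation $2r_n^{2}=(r_n+n)R_nR_{n-1}$. Eliminating $r_n$ between these and expressing $R_n,R_{n-1}$ through $\sigma_{n-1},\sigma_n,\sigma_{n+1}$ via \eqref{4.5} gives \eqref{4.12} after elementary algebra.

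The reductions to $\sigma$-form Painlev\'e IV are then mechanical. Under $\widetilde S=s-\tfrac12 t$ and $\widetilde\sigma_n=2\sigma_n-nt$ one has $2s-t=2\widetilde S$, $nt-2\sigma_n=-\widetilde\sigma_n$, and $2(\sigma_k-\sigma_{k+1})+t=\widetilde\sigma_k-\widetilde\sigma_{k+1}$, which collapse \eqref{4.12} onto \eqref{4.15} and \eqref{4.13} onto \eqref{4.16}; the $t$-direction change, carried out with the chain-rule factor $\partial_t=-\tfrac12\partial_{\widetilde T}$, sends \eqref{4.14} to \eqref{4.17}. There is no conceptual obstacle once the master identity is in place; the only real hazard is bookkeeping the numerical constants in the $t$-versions (the $16$ in \eqref{4.14} and the coefficient $4$ in \eqref{4.17}), all of which can be traced to the factor $\tfrac12$ on the right-hand side of \eqref{3.12} and the Jacobian $dt/d\widetilde T=-2$.
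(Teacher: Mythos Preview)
Your proposal is correct and follows essentially the same route as the paper: the paper's key relation \eqref{4.21} is exactly your ``master identity'' $(r_n+n)(R_n+R_{n-1})=nt-2\sigma_n+(2s-t)r_n$, and where the paper adds and subtracts this with \eqref{3.11} (resp.\ \eqref{3.12}) and multiplies the resulting pair (see \eqref{4.22}--\eqref{4.24} and \eqref{4.26}--\eqref{4.28}), you obtain the identical formula by completing the square in the associated quadratic. The derivation of the difference equation via \eqref{4.19}--\eqref{4.20} and the final change-of-variable reductions match your plan verbatim; the only cosmetic point is that your rewriting of the bracket $2(r_n+n)R_n+\partial_s r_n$ as $(r_n+n)(R_n+R_{n-1})$ tacitly uses \eqref{3.11} as well as \eqref{2.6a}--\eqref{2.6c}.
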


\begin{proof}
Using \eqref{2.6c}, \eqref{4.4} and \eqref{4.5}, \eqref{2.6b} becomes
\begin{equation}\label{4.18}
\begin{aligned}
nt-2\sigma_{n}=(r_{n}+n)(2\sigma_{n-1}-2\sigma_{n+1}+2t)+(t-2s)r_{n}.
\end{aligned}
\end{equation}
Solving for $r_{n}$ from the above equation, we have
\begin{equation}\label{4.19}
\begin{aligned}
r_{n}=\frac{-nt-2\sigma_{n}-2n(\sigma_{n-1}-\sigma_{n+1})}{2\sigma_{n-1}-2\sigma_{n+1}+3t-2s}.
\end{aligned}
\end{equation}
With the help of \eqref{2.6c} and \eqref{4.5}, it follows from \eqref{2.6a} that
\begin{equation}\label{4.20}
\begin{aligned}
2r^2_{n}=(r_{n}+n)(2\sigma_{n}-2\sigma_{n+1}+t)(2\sigma_{n-1}-2\sigma_{n}+t).
\end{aligned}
\end{equation}
Inserting \eqref{4.19} into \eqref{4.20}, we obtain \eqref{4.12}.

Using \eqref{2.6a}, \eqref{2.6c} and \eqref{4.4}, \eqref{2.6b} becomes
\begin{equation}\label{4.21}
\begin{aligned}
\frac{2r^2_{n}}{R_{n}}+(r_{n}+n)R_{n}+(t-2s)r_{n}=nt-2\sigma_{n}.
\end{aligned}
\end{equation}
The minus and sum of \eqref{3.11} and \eqref{4.21} gives
\begin{equation}\label{4.22}
\begin{aligned}
\frac{4r^2_{n}}{R_{n}}=nt-2\sigma_{n}-(t-2s)r_{n}+\partial_{s}r_{n},
\end{aligned}
\end{equation}
and
\begin{equation}\label{4.23}
\begin{aligned}
2(r_{n}+n)R_{n}=nt-2\sigma_{n}-(t-2s)r_{n}-\partial_{s}r_{n}.
\end{aligned}
\end{equation}
The product of \eqref{4.22} and \eqref{4.23} leads to
\begin{equation}\label{4.24}
\begin{aligned}
8r^2_{n}(r_{n}+n)=[nt-2\sigma_{n}-(t-2s)r_{n}]^2-(\partial_{s}r_{n})^2.
\end{aligned}
\end{equation}

Differential \eqref{4.7} over $s$ to obtain
\begin{equation}\label{4.25}
\begin{aligned}
\partial_{s}\sigma_{n}=r_{n}.
\end{aligned}
\end{equation}
Substituting \eqref{4.25} into \eqref{4.24}, we come to \eqref{4.13}.

To proceed,
The minus and sum of \eqref{3.12} and \eqref{4.21} gives
\begin{equation}\label{4.26}
\begin{aligned}
\frac{4r^2_{n}}{R_{n}}=nt-2\sigma_{n}-(t-2s)r_{n}-2\partial_{t}r_{n},
\end{aligned}
\end{equation}
and
\begin{equation}\label{4.27}
\begin{aligned}
2(r_{n}+n)R_{n}=nt-2\sigma_{n}-(t-2s)r_{n}+2\partial_{t}r_{n}.
\end{aligned}
\end{equation}
The product of \eqref{4.26} and \eqref{4.27} leads to
\begin{equation}\label{4.28}
\begin{aligned}
8r^2_{n}(r_{n}+n)=[nt-2\sigma_{n}-(t-2s)r_{n}]^2-4(\partial_{t}r_{n})^2.
\end{aligned}
\end{equation}
Since $\partial_{t}\mathrm{p}(n)=-\beta_{n}$, we find from \eqref{4.7} that
\begin{equation}\label{4.29}
\begin{aligned}
\partial_{t}\sigma_{n}=n-\beta_{n}=\frac{1}{2}(n-r_{n}).
\end{aligned}
\end{equation}
Solving $r_{n}$ from \eqref{4.29} gives
\begin{equation}\label{4.30}
\begin{aligned}
r_{n}=n-2\partial_{t}\sigma_{n}.
\end{aligned}
\end{equation}
Inserting \eqref{4.30} into \eqref{4.28}, we obtain \eqref{4.14}.

In addition, letting
\begin{equation}\label{4.31}
	\begin{aligned}
		s=\frac{1}{2}t+\widetilde{S}, \quad \sigma_{n}(s,t)=\dfrac{1}{2}\widetilde{\sigma}_{n}+n(s-\widetilde{S}),
	\end{aligned}
\end{equation}
and substituting into \eqref{4.12}, \eqref{4.12} becomes a discrete $\sigma$-form of the Painlev\'{e} IV equation \eqref{4.15}. Substituting \eqref{4.31} into \eqref{4.13}, $\widetilde{\sigma}_{n}$ satisfies a Jimbo-Miwa-Okamoto $\sigma$-form of the Painlev\'{e} IV equation \eqref{4.16}. Letting $t=2s-2\widetilde{T}$,~$\sigma_{n}(s,t)=\dfrac{1}{2}\widehat{\sigma}_{n}+n(s-\widetilde{T})$ and plugging into \eqref{4.14}, then $\widehat{\sigma}_{n}$ satisfies a Jimbo-Miwa-Okamoto $\sigma$-form of the Painlev\'{e} IV equation \eqref{4.17}.
\end{proof}

\section{Asymptotics for the recurrence coefficients and the Hankel determinant}
To derive the asymptotics of recurrence coefficients as $n \rightarrow \infty$, we first briefly described Dyson's Coulomb fluid approach, see e.g. \cite{CI1997,CL1998, CM2002}. The total energy of the repelling charged particles, confined by a common external potential $v_{0}(x)$ reads
\begin{equation}\label{5.1}
\begin{aligned}
E\left(x_1, x_2, \ldots, x_n\right)=-2 \sum_{1 \leq j<k \leq n} \mathrm{ln} \left|x_j-x_k\right|+\sum_{j=1}^n v_{0}\left(x_j\right).
\end{aligned}
\end{equation}
For large enough $n$, the particles can be approximated as a continuous fluid with a certain density $\sigma(x)$, which supported on a single interval $(a, b) $ \cite{Dyson}.  The equilibrium density of the fluid is obtained by the constrained minimization of the free energy function $F[\sigma]$,
\begin{equation}\label{5.2}
\begin{aligned}
F[\sigma]:=\int_a^b \sigma(x) v_{0}(x) d x-\int_a^b \int_a^b \sigma(x) \mathrm{ln} |x-y| \sigma(y) d x d y
\end{aligned}
\end{equation}
subject to
\begin{equation}\label{5.3}
\begin{aligned}
\int_a^b \sigma(x) d x=n, \quad \sigma(x)>0.
\end{aligned}
\end{equation}
Upon minimization, the equilibrium density $\sigma(x)$ is found to satisfy the integral equation
\begin{equation}\label{5.4}
\begin{aligned}
L:=v_{0}(x)-2 \int_a^b \mathrm{ln} |x-y| \sigma(y) d y, \quad x \in[a, b],
\end{aligned}
\end{equation}
where $L$ is the Lagrange multiplier.

 $F[\sigma]$ and $L$ have the following relation \cite{CI1997},
\begin{equation}\label{5.4a}
\begin{aligned}
\frac{\partial F}{\partial n}=L.
\end{aligned}
\end{equation}

To derivative \eqref{5.4} over $x$ both side gives rise to a singular integral equation
\begin{equation}\label{5.5}
\begin{aligned}
v_{0}^{\prime}(x)-2 \text { p.v. } \int_a^b \frac{\sigma(y)}{x-y} d y=0, \quad x \in[a, b],
\end{aligned}
\end{equation}
where p.v. denotes the Cauchy principal value. Because the potential $v_{0}(x)$ is convex and $\sigma(x)$ is supported in a single interval $(a,b)$, the solution of \eqref{5.5} reads
\begin{equation}\label{5.6}
\begin{aligned}
\sigma(x)=\frac{1}{2 \pi^2} \sqrt{\frac{b-x}{x-a}} \mathrm{p} .\mathrm{v} . \int_a^b \frac{v_{0}^{\prime}(y)}{y-x} \sqrt{\frac{y-a}{b-y}} d y .
\end{aligned}
\end{equation}
Thus the normalization
\begin{equation}\label{5.6a}
\begin{aligned}
\int_a^b \sigma(x) d x=n,
\end{aligned}
\end{equation}
becomes
\begin{equation}\label{5.7}
\begin{aligned}
\frac{1}{2 \pi} \int_a^b \sqrt{\frac{y-a}{b-y}} v_{0}^{\prime}(y) d y=n.
\end{aligned}
\end{equation}
Putting $a=s$ and $v_{0}^{\prime}(x)=2x-t$, we find from \eqref{5.7} that
\begin{equation}\label{5.8}
\begin{aligned}
3b^2-2(s+t)b-s^2+2st-8n=0.
\end{aligned}
\end{equation}
Since $b>0$, we have
$$b=\frac{1}{3}\bigg(s+t+\sqrt{(s+t)^2+3(s^2-2st+8n)}\bigg).$$
Hence, we assume the following expansion,
$$b=\frac{2\sqrt{6n}}{3}+\sum\limits_{i=0}^{\infty}\tau_{i}n^{-\frac{i}{2}}.$$
Substituting this into \eqref{5.8} and comparing the corresponding coefficients on both sides, we get
\begin{equation}\label{5.9a}
\begin{aligned}
b=\frac{2\sqrt{6n}}{3}+\frac{t+s}{3}+\frac{\sqrt{6}(2s-t)^2}{72\sqrt{n}}-\frac{\sqrt{6}(2s-t)^4}{6912n^{\frac{3}{2}}}+\frac{\sqrt{6}(2s-t)^6}{331776n^{\frac{5}{2}}}+\mathcal{O}(n^{-\frac{7}{2}}).
\end{aligned}
\end{equation}

It is shown in \cite{CI1997} that
\begin{equation*}
\begin{aligned}
\alpha_{n}\simeq\frac{b+a}{2},\quad \beta_{n}\simeq\bigg(\frac{b-a}{4}\bigg)^2, \quad n\rightarrow\infty.
\end{aligned}
\end{equation*}
In our problem,
\begin{equation*}
\begin{aligned}
\alpha_{n}\simeq\frac{b+s}{2}=\frac{1}{6}(4s+t)+\frac{\sqrt{6n}}{3},\quad \beta_{n}\simeq\bigg(\frac{b-s}{4}\bigg)^2=\frac{n}{6}+\frac{(t-2s)\sqrt{6n}}{36}+\frac{(t-2s)^2}{144}, \quad n\rightarrow\infty.
\end{aligned}
\end{equation*}
For $n\rightarrow\infty$, we assume
\begin{equation}\label{5.9}
\begin{aligned}
\alpha_{n}=\frac{\sqrt{6n}}{3}+\sum\limits_{i=0}^{\infty}a_{i}n^{-\frac{i}{2}},
\end{aligned}
\end{equation}
\begin{equation}\label{5.10}
\begin{aligned}
\beta_{n}=\frac{n}{6}+\sum\limits_{j=-1}^{\infty}b_{j}n^{-\frac{j}{2}}.
\end{aligned}
\end{equation}

\begin{theorem}
The recurrence coefficients $\alpha_{n}$ and $\beta_{n}$ have the following expansions:
\begin{equation}\label{5.13}
\begin{aligned}
\alpha_{n}=\frac{\sqrt{6n}}{3}+\frac{2s}{3}+\frac{t}{6}+\frac{\sqrt{6}(2s-t)^2+12}{144n^{\frac{1}{2}}}+\mathcal{O}(n^{-\frac{3}{2}}),
\end{aligned}
\end{equation}
\begin{equation}\label{5.14}
\begin{aligned}
\beta_{n}=\frac{n}{6}-\frac{\sqrt{6}(2s-t)\sqrt{n}}{36}+\frac{(2s-t)^2}{72}-\frac{\sqrt{6}(2s-t)^3}{1728n^{\frac{1}{2}}}+\mathcal{O}(n^{-1}).
\end{aligned}
\end{equation}
\end{theorem}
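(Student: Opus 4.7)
The plan is to determine the coefficients $a_i$ and $b_j$ in the ansatze \eqref{5.9}--\eqref{5.10} by substituting them into the exact identities from Sections 2 and 3 and balancing like powers of $n^{-1/2}$. The Coulomb fluid approximations $\alpha_n \simeq (b+s)/2$ and $\beta_n \simeq ((b-s)/4)^2$, combined with the $b$-expansion \eqref{5.9a}, fix the leading three terms of $\alpha_n$ through the $\sqrt{6}(2s-t)^2$ piece at order $n^{-1/2}$, and the leading terms of $\beta_n$ through order $n^0$; however, a direct substitution shows that the heuristic misses both the extra $+12$ in the $n^{-1/2}$ coefficient of \eqref{5.13} and part of the constant term of \eqref{5.14}, since the fluid formulas themselves are only approximate. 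These missing pieces must come from the rigorous nonlinear equations of Section~3, and they fix the form of the ansatz.

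To promote the heuristic estimate to a sharp one, I would substitute $R_n = 2\alpha_n - t$ (from \eqref{2.5b}) into the second-order PDE \eqref{3.20}. Writing $R_n = \tfrac{2\sqrt{6n}}{3} + c_0 + c_1 n^{-1/2} + c_2 n^{-1} + \cdots$ with $c_0 = 2a_0 - t$ and $c_i = 2a_i$ for $i \geq 1$, each term on the right-hand side of \eqref{3.20}---the cubic $\tfrac{3}{2} R_n^3$, the quadratic $-(4s-2t) R_n^2$, the linear term with explicit-$n$ coefficient $(2s^2 - 4n - 2 - 2st + \tfrac{1}{2}t^2)$, and the derivative piece $(\partial_s R_n)^2/(2R_n)$---expands as a Laurent series in $n^{-1/2}$. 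Matching coefficients of like powers produces a triangular recursion $c_i = \Phi_i(s, t, c_0, \ldots, c_{i-1})$ that determines the $c_i$ (and hence the $a_i$) uniquely from the leading data. The extra $+12$ in \eqref{5.13} is forced by the $-4n$ contribution inside the coefficient of $R_n$, which couples the $O(n^{1/2})$ part of $R_n$ to the $n^{-1/2}$ correction.

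The expansion \eqref{5.14} of $\beta_n$ follows from the same pipeline in either of two equivalent ways. First, substitute the expansion of $R_n$ obtained above into the Riccati-type identity \eqref{3.13} to solve for $r_n$ order by order, then read off $\beta_n = (r_n+n)/2$ from \eqref{2.6c}. Alternatively, substitute the ansatz for $R_n$ directly into the $t$-direction PDE \eqref{3.21} and recover $r_n$ via \eqref{3.14}. Either way, coefficient matching yields \eqref{5.14}, and in particular fixes the constant term to $(2s-t)^2/72$ rather than the $(2s-t)^2/144$ produced by the bare fluid formula.

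The main obstacle is arithmetic bookkeeping, not any conceptual point. Once the ansatz is substituted into \eqref{3.20}, the cubic $\tfrac{3}{2} R_n^3$ produces, through order $n^{-1/2}$, a dense polynomial mixing $c_0, c_1, c_2$ with integer powers of $\sqrt{6n}$; the derivative piece $(\partial_s R_n)^2/(2R_n)$ further entangles the $c_i$ via the geometric expansion of $R_n^{-1}$ around its leading $\tfrac{2\sqrt{6n}}{3}$. Keeping $\sqrt{6n}$ as a single symbol until the very end, carrying two more terms of the ansatz than the targeted order, and systematically collecting by total power of $n^{-1/2}$ keeps the computation manageable. No new analytic input is needed beyond the Coulomb fluid analysis of Section~5, which justifies the single-interval support used in \eqref{5.4}--\eqref{5.6} and determines the form of the ansatze.
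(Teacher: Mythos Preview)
Your approach is correct but takes a genuinely different route from the paper's own proof. The paper does not use the Painlev\'{e}-IV PDE \eqref{3.20} at all; instead it combines the ladder identities \eqref{2.5a}, \eqref{2.5b}, \eqref{2.44a}, \eqref{2.44b}, \eqref{2.6c} into two purely \emph{difference} relations in $n$,
\[
2\beta_{n+1}+2\beta_{n}-2n-1=(s-\alpha_{n})(2\alpha_{n}-t),
\qquad
(\alpha_{n}-s)(2\beta_{n+1}-2\beta_{n}-1)=\beta_{n}(2\alpha_{n-1}-t)-\beta_{n+1}(2\alpha_{n+1}-t),
\]
and then substitutes the ansatze \eqref{5.9}--\eqref{5.10} together with their shifts $\alpha_{n\pm1}$, $\beta_{n+1}$ expanded via $(1\pm 1/n)^{\pm i/2}$. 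Matching powers of $n^{-1/2}$ gives a coupled $2\times 2$ linear system for $(a_i,b_{i-1})$ at each step, from which the expansions follow. Your method instead decouples the problem: the PDE \eqref{3.20} for $R_n=2\alpha_n-t$ involves no shifts in $n$, so matching powers yields $a_i$ directly (and you correctly note the crucial $-4nR_n$ term that generates the ``$+12$''); then $\beta_n$ is recovered afterwards from \eqref{3.13} and \eqref{2.6c}. Both routes are legitimate; yours avoids Taylor-expanding in $n$ at the cost of eventually having to track $s$-derivatives of the $c_i$ at subleading orders, while the paper's discrete route couples $\alpha_n$ and $\beta_n$ but needs only algebra.
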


\begin{proof}
 Substituting \eqref{2.5a} and \eqref{2.6c} into \eqref{2.5b}, we have
\begin{equation}\label{5.15}
\begin{aligned}
2\beta_{n+1}+2\beta_{n}-2n-1=(s-\alpha_{n})(2\alpha_{n}-t).
\end{aligned}
\end{equation}
Inserting \eqref{2.44a} into \eqref{2.44b} produces
\begin{equation}\label{5.16}
\begin{aligned}
(\alpha_{n}-s)(2\beta_{n+1}-2\beta_{n}-1)=\beta_{n}R_{n-1}-\beta_{n+1}R_{n+1}.
\end{aligned}
\end{equation}
In view of \eqref{2.5a}, it follows that
\begin{equation}\label{5.17}
\begin{aligned}
R_{n-1}=2\alpha_{n-1}-t,
\end{aligned}
\end{equation}
\begin{equation}\label{5.18}
\begin{aligned}
R_{n+1}=2\alpha_{n+1}-t.
\end{aligned}
\end{equation}
Substituting \eqref{5.17} and \eqref{5.18} into \eqref{5.16} gives
\begin{equation}\label{5.19}
\begin{aligned}
(\alpha_{n}-s)(2\beta_{n+1}-2\beta_{n}-1)=\beta_{n}(2\alpha_{n-1}-t)-\beta_{n+1}(2\alpha_{n+1}-t).
\end{aligned}
\end{equation}
Replacing $n$ by $n\pm1$ in \eqref{5.9} and \eqref{5.10}, respectively, we have
\begin{equation}\label{5.18a}
\begin{aligned}
\alpha_{n\pm1}&=\frac{\sqrt{6(n\pm1)}}{3}+\sum\limits_{i=0}^{\infty}a_{i}(n\pm1)^{-\frac{i}{2}}\\
&=\frac{\sqrt{6n}}{3}(1\pm\frac{1}{n})^{\frac{1}{2}}+a_{0}+\sum\limits_{i=1}^{\infty}a_{i}n^{-\frac{i}{2}}(1\pm \frac{1}{n})^{-\frac{i}{2}}\\
&=\frac{\sqrt{6n}}{3}+a_{0}+\dfrac{\pm\dfrac{\sqrt{6}}{6}+a_{1}}{n^{\frac{1}{2}}}+\dfrac{a_{2}}{n}+\dfrac{-\dfrac{\sqrt{6}}{24}\mp\dfrac{a_{1}}{2}+a_{3}}{n^{\frac{3}{2}}}+\mathcal{O}(n^{-2}),
\end{aligned}
\end{equation}
and
\begin{equation}\label{5.19a}
\begin{aligned}
\beta_{n+1}&=\frac{n+1}{6}+\sum\limits_{i=-1}^{\infty}b_{i}(n\pm1)^{-\frac{i}{2}}\\
&=\frac{n+1}{6}+b_{-1}\sqrt{n}(1+\frac{1}{n})^{\frac{1}{2}}+b_{0}+\sum\limits_{i=1}^{\infty}b_{i}n^{-\frac{i}{2}}(1+ \frac{1}{n})^{-\frac{i}{2}}\\
&=\frac{n+1}{6}+b_{-1}\sqrt{n}+b_{0}+\dfrac{\dfrac{b_{-1}}{2}+b_{1}}{n^{\frac{1}{2}}}+\dfrac{b_{2}}{n}+\dfrac{-\dfrac{b_{-1}}{8}-\dfrac{b_{1}}{2}+b_{3}}{n^{\frac{3}{2}}}+\mathcal{O}(n^{-2}).
\end{aligned}
\end{equation}
Substituting \eqref{5.18a} and \eqref{5.19a} into \eqref{5.15} and \eqref{5.19}, we have
\begin{equation*}
\begin{aligned}
\sum\limits_{i=-1}^{\infty}\frac{d_{i}}{n^{\frac{i}{2}}}=0,\quad \sum\limits_{i=-1}^{\infty}\frac{e_{i}}{n^{\frac{i}{2}}}=0,
\end{aligned}
\end{equation*}
where $d_{i}$ and $e_{i}$ depends on the coefficients $a_{i}$, $b_{i}$, $t$ and $s$. In order to satisfy the above equations, all the coefficients of powers of $n$ are identically zero. After simplify, $e_{-1}=0$ is identical.

The equations $d_{-1}=0$ and $e_{0}=0$ give rise to
\begin{equation*}
\begin{aligned}
4b_{-1}-\frac{\sqrt{6}(2s-t)}{3}+\frac{4\sqrt{6}}{3}a_{0}=0,
\end{aligned}
\end{equation*}
and
\begin{equation*}
\begin{aligned}
-\frac{4\sqrt{6}}{3}b_{-1}-\frac{2s-t}{3}+\frac{1}{3}a_{0}=0.
\end{aligned}
\end{equation*}
Solving the above two equations, we find
\begin{equation*}
\begin{aligned}
a_{0}=\frac{2s}{3}+\frac{t}{6}, \quad  b_{-1}=-\frac{\sqrt{6}(2s-t)}{36}.
\end{aligned}
\end{equation*}
With the values of $a_{0}$ and $b_{-1}$, the equations $d_{0}=0$ and $e_{1}=0$ show
\begin{equation*}
\begin{aligned}
4b_{0}+\frac{4\sqrt{6}}{3}a_{1}-\frac{2}{3}-\frac{(2s-t)^2}{9}=0,
\end{aligned}
\end{equation*}
and
\begin{equation*}
\begin{aligned}
-\frac{2\sqrt{6}}{3}b_{0}-\frac{\sqrt{6}}{18}+\frac{\sqrt{6}(2s-t)^2}{216}+\frac{2}{3}a_{1}=0.
\end{aligned}
\end{equation*}
Solving the above two equations, we find
\begin{equation*}
\begin{aligned}
a_{1}=\frac{\sqrt{6}(2s-t)^2}{144}+\frac{\sqrt{6}}{12}, \quad  b_{0}=\frac{(2s-t)^2}{72}.
\end{aligned}
\end{equation*}
This procedure can be easily extended to find higher coefficients $a_{2}, a_{3},\cdots,$ and $b_{1}, b_{2}, b_{3},\cdots.$ We only list some of them
\begin{equation*}
\begin{aligned}
a_{2}=0, \quad  b_{1}=-\frac{\sqrt{6}(2s-t)^3}{1728}.
\end{aligned}
\end{equation*}

\end{proof}

\begin{lemma}
We have
\begin{equation}\label{5.20}
\begin{aligned}
L=\frac{1}{8}[3b^2+2bs+3s^2-4t(b+s)]-2n\mathrm{ln}\frac{b-s}{4}.
\end{aligned}
\end{equation}
When $n\rightarrow\infty$, we obtain
\begin{equation}\label{5.211}
\begin{aligned}
L=&-n\mathrm{ln}n+(1+\mathrm{ln}6)n+\frac{\sqrt{6}}{3}(2s-t)\sqrt{n}+\frac{1}{12}(8s^2-8st-t^2)+\frac{\sqrt{6}(2s-t)^3}{432n^{\frac{1}{2}}}\\
&-\frac{(2s-t)^4}{1152n}+\frac{\sqrt{6}(2s-t)^5}{13824n^{\frac{3}{2}}}+\frac{(2s-t)^6}{165888n^{2}}+\mathcal{O}(n^{-\frac{5}{2}}).
\end{aligned}
\end{equation}
and
\begin{equation}\label{5.222}
\begin{aligned}
F[\sigma]=&-\frac{1}{2}n^2\mathrm{ln}n+\frac{(3+2\mathrm{ln}6)}{4}n^2+\frac{2\sqrt{6}}{9}(2s-t)n^{\frac{3}{2}}+\frac{1}{12}(8s^2-8st-t^2)n+\frac{\sqrt{6}(2s-t)^3n^{\frac{1}{2}}}{216}\\
&-\frac{(2s-t)^4\mathrm{ln}n}{1152}+C_{0}-\frac{\sqrt{6}(2s-t)^5}{6912n^{\frac{1}{2}}}-\frac{(2s-t)^6}{165888n}+\mathcal{O}(n^{-\frac{3}{2}}),
\end{aligned}
\end{equation}
where $C_{0}$ is a constant independent of $n$.

\end{lemma}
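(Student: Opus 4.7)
The plan has three stages corresponding to the three conclusions \eqref{5.20}, \eqref{5.211}, and \eqref{5.222}.

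First, to establish the closed form \eqref{5.20} for $L$, I would work directly from \eqref{5.4} evaluated at a convenient point of the support, taking $x=b$ (or equivalently $x=s$), since $L$ is constant on $[s,b]$. The key preliminary is to simplify the density \eqref{5.6} in our setting. With $a=s$ and $v_0'(y)=2y-t$, the decomposition $\frac{2y-t}{y-x}=2+\frac{2x-t}{y-x}$ reduces the principal-value integral to two standard evaluations,
\begin{equation*}
\int_{s}^{b}\sqrt{\tfrac{y-s}{b-y}}\,dy=\tfrac{\pi(b-s)}{2},\qquad \text{p.v.}\!\int_{s}^{b}\tfrac{\sqrt{(y-s)/(b-y)}}{y-x}\,dy=\pi,
\end{equation*}
yielding the explicit form $\sigma(y)=\frac{1}{2\pi}\sqrt{\frac{b-y}{y-s}}\,[\,(b-s)+(2y-t)\,]$. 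Substituting into $L=v_{0}(b)-2\int_{s}^{b}\ln(b-y)\sigma(y)\,dy$ and changing variables $y=s+(b-s)u$ reduces everything to beta integrals of the types $\int_0^1 u^{-1/2}(1-u)^{k+1/2}\,du$ and $\int_0^1 u^{-1/2}(1-u)^{k+1/2}\ln(1-u)\,du$. The first family gives the polynomial piece $\tfrac{1}{8}[3b^{2}+2bs+3s^{2}-4t(b+s)]$ after simplification using the normalization relation \eqref{5.8}, while the $\ln(1-u)$ pieces collapse, after using \eqref{5.7}/\eqref{5.6a}, to $-2n\ln\tfrac{b-s}{4}$. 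This is the identity \eqref{5.20}.

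Second, to derive the asymptotic expansion \eqref{5.211}, I would substitute the expansion \eqref{5.9a} of $b$ into \eqref{5.20}. The leading behavior comes from the logarithmic term: since $b-s=\tfrac{2\sqrt{6n}}{3}+\tfrac{t-2s}{3}+O(n^{-1/2})$, expanding $\ln\tfrac{b-s}{4}$ contributes the dominant $-n\ln n+(1+\ln 6)n$ together with a telescoping series of $n^{-k/2}$ corrections. The polynomial piece $\tfrac{1}{8}[3b^{2}+2bs+3s^{2}-4t(b+s)]$ is expanded by straightforward multiplication; collecting powers of $n^{-1/2}$ against \eqref{5.211} order by order and using the exact relation \eqref{5.8} at each order to eliminate spurious terms delivers the stated series.

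Third, for \eqref{5.222}, I would invoke \eqref{5.4a}: $\partial F/\partial n=L$, and integrate the asymptotic series \eqref{5.211} termwise with respect to $n$. The leading term integrates via $\int(-\ln n)\,dn=-n\ln n+n$, giving $-\tfrac{1}{2}n^{2}\ln n+\tfrac{3}{4}n^{2}$ and combining with the $(1+\ln 6)n$ contribution to produce the $\tfrac{3+2\ln 6}{4}n^{2}$ coefficient; each positive power $n^{k}$ integrates to $\tfrac{n^{k+1}}{k+1}$; the term of order $n^{-1}$ integrates to a $\ln n$, which is precisely the $-\tfrac{(2s-t)^{4}}{1152}\ln n$ term in \eqref{5.222}; and the negative half-integer powers contribute explicit corrections. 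The undetermined constant of integration, depending on $s,t$ but not on $n$, is the $C_{0}$ appearing in \eqref{5.222}.

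The principal obstacle is the first stage: the integral evaluation leading to \eqref{5.20} requires careful bookkeeping of the logarithmic and algebraic contributions and genuine use of the normalization \eqref{5.7} to fold the $-2n$ coefficient in front of $\ln\tfrac{b-s}{4}$. Once the closed form is in hand, Stages two and three are long but mechanical series manipulations, the main subtlety being consistent truncation of \eqref{5.9a} to the order needed to match \eqref{5.211} and \eqref{5.222}.
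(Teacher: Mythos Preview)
Your plan is sound and will reach all three conclusions; stages two and three (substituting \eqref{5.9a} into \eqref{5.20}, then integrating via \eqref{5.4a}) are exactly what the paper does. The only genuine difference is in stage one. You fix a single point $x=b$ in \eqref{5.4}, compute $\sigma(y)$ explicitly from \eqref{5.6}, and then evaluate $\int_{s}^{b}\ln(b-y)\,\sigma(y)\,dy$ through a family of beta and $\ln$-beta integrals, invoking the normalization \eqref{5.7} at the end to recognize the coefficient $-2n$. The paper instead averages \eqref{5.4} against the arcsine weight $\tfrac{1}{\pi\sqrt{(b-x)(x-s)}}$ over $[s,b]$: since $L$ is constant on the support, the left side is still $L$; the $v_{0}$ term gives the polynomial piece directly; and the double integral factors because of the classical identity $\int_{s}^{b}\tfrac{\ln|x-y|}{\sqrt{(b-x)(x-s)}}\,dx=\pi\ln\tfrac{b-s}{4}$ for every $y\in[s,b]$, so the remaining integral over $y$ is simply $n$ by \eqref{5.6a}. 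The paper's route is shorter and never needs the explicit form of $\sigma$, while yours is more hands-on but perfectly workable; the extra bookkeeping you flag (matching the $-2n$ prefactor) is real in your approach but disappears for free in the paper's averaging trick.
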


\begin{proof}
Putting $a=s$, multiplying both sides by $\frac{1}{\sqrt{(b-x)(x-s)}}$ and integrating from $s$ to $b$ gives rise to
\begin{equation}\label{5.21}
\begin{aligned}
\pi L=\int_{s}^{b}\frac{v_{0}(x)}{\sqrt{(b-x)(x-s)}}dx-2 \int_a^b \sigma(y) d y\int_{s}^{b}\frac{\mathrm{ln} |x-y|}{\sqrt{(b-x)(x-s)}}dx , \quad x \in[s, b],
\end{aligned}
\end{equation}
where we used the integral identities
\begin{equation*}
\begin{aligned}
\int_{s}^{b}\frac{1}{\sqrt{(b-x)(x-s)}}dx=\pi.
\end{aligned}
\end{equation*}
With $v_{0}(x)=x^2-tx$, after some computations, we have
\begin{equation}\label{5.22}
\begin{aligned}
\int_{s}^{b}\frac{v_{0}(x)}{\sqrt{(b-x)(x-s)}}dx=\frac{1}{8}[3b^2+2bs+3s^2-4t(b+s)].
\end{aligned}
\end{equation}

Denote that
\begin{equation*}
\begin{aligned}
\int_{s}^{b}\frac{\mathrm{ln} |x-y|}{\sqrt{(b-x)(x-s)}}dx=C,
\end{aligned}
\end{equation*}
where $C$ is independent of $y$. Then we replace $y$ by $b$ to compute $C$
\begin{equation}\label{5.23}
\begin{aligned}
\int_{s}^{b}\frac{\mathrm{ln} (b-x)}{\sqrt{(b-x)(x-s)}}dx=\int_{0}^{1}\frac{\mathrm{ln} [(b-s)(1-u)]}{\sqrt{u(1-u)}}du=\pi\mathrm{ln}\frac{b-s}{4} .
\end{aligned}
\end{equation}
where we used the integral identities
\begin{equation*}
\begin{aligned}
\int_{1}^{0}\frac{\mathrm{ln} (1-u)}{\sqrt{u(1-u)}}du=-2\pi\mathrm{ln}2.
\end{aligned}
\end{equation*}
Substituting \eqref{5.22} and \eqref{5.23} into \eqref{5.21}, we obtain the desired result, where we used the fact \eqref{5.6a}.

Inserting \eqref{5.9} into \eqref{5.20} and letting $n\rightarrow\infty$, we find \eqref{5.211}.

Bearing in mind of \eqref{5.4a}, we obtain the \eqref{5.222}.

\end{proof}

\section{Large $n$ behavior of orthogonal polynomials}
By making use of the behaviors about recurrence coefficients, we consider the large $n$ behavior of the monic orthogonal polynomials $P_{n}$ and show that$P_{n}$ satisfies the biconfluent Heun equation by using the methods of previous studies \cite{ZC2019}, \cite{YLZC} and \cite{YCLZC}.
\begin{theorem}
As $n\rightarrow\infty$, then $u(z)=P_{n}(\frac{z}{\sqrt{2}}+s)$ satisfies the biconfluent Heun equation \cite{Ronveaux}
\begin{equation}\label{6.1}
\begin{aligned}
u^{\prime \prime}(z)-\left(z+\frac{\gamma}{z}+\delta\right) u^{\prime}(z)+\frac{\alpha z-q}{z}u(z)=0,
\end{aligned}
\end{equation}
where $\gamma=-1, \delta=\frac{\sqrt{2}}{2}(2s- t), \alpha=0, q=-\frac{4 \sqrt{3} n^{\frac{3}{2}}}{9}$.
\end{theorem}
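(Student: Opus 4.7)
The plan is to specialize the second-order linear ODE of Lemma \ref{2} to our explicit forms of $A_n$ and $B_n$ in \eqref{2.2a}--\eqref{2.2b}, change variable $x = z/\sqrt{2} + s$, and then extract the leading-order equation in $n$ using the asymptotic expansions from Section 5.

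First I would carry out the direct computation
\begin{equation*}
\frac{A_n'(x)}{A_n(x)} = -\frac{R_n}{(x-s)[R_n+2(x-s)]},\qquad B_n'(x) - \frac{A_n'(x)B_n(x)}{A_n(x)} = -\frac{2r_n}{(x-s)[R_n+2(x-s)]},
\end{equation*}
the key point being that the $(x-s)^{-2}$ singularities of $B_n'$ and $A_n'B_n/A_n$ cancel exactly. This keeps the ODE regular away from $x=s$ and infinity, which is precisely the singularity pattern of a biconfluent Heun equation. Together with $\sum_{j=0}^{n-1} A_j(x) = (x-s)^{-1}\sum_{j=0}^{n-1} R_j + 2n$, this determines all the $n$-dependent ingredients of the ODE, apart from $v_0'(x) = 2x-t$.

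Next, using $P_n'(x)=\sqrt{2}\,u'(z)$ and $P_n''(x)=2u''(z)$, the ODE of Lemma \ref{2} becomes, after division by $2$,
\begin{equation*}
u''(z) - \frac{1}{\sqrt{2}}\Bigl[\frac{A_n'}{A_n}+v_0'\Bigr]u'(z) + \frac{1}{2}\Bigl[B_n'-\frac{A_n'B_n}{A_n}+\sum_{j=0}^{n-1}A_j\Bigr]u(z) = 0.
\end{equation*}
Substituting $x-s = z/\sqrt{2}$ and invoking Theorem 5.1, the asymptotics $R_n \sim 2\sqrt{6n}/3$ (from $R_n=2\alpha_n-t$) give $R_n+\sqrt{2}z\sim R_n$, so $A_n'/A_n \to -\sqrt{2}/z$; hence the coefficient of $u'(z)$ becomes $1/z - z - \sqrt{2}(2s-t)/2$. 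For the coefficient of $u$, the term $-2r_n/[(x-s)(R_n+2(x-s))]$ is only $O(\sqrt{n}/z)$ since $r_n = 2\beta_n - n \sim -2n/3$, hence is subdominant. The leading part comes from $\sum_j A_j$: using \eqref{4.3} together with the expansion $\alpha_j \sim \sqrt{6j}/3$ (or equivalently \eqref{2.6b} with the asymptotics of $\beta_n, R_n, r_n$), one obtains $\sum_{j=0}^{n-1} R_j \sim 4\sqrt{6}\,n^{3/2}/9$, so $\sqrt{2}\sum R_j/z \sim 8\sqrt{3}\,n^{3/2}/(9z)$.

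Collecting the dominant terms yields
\begin{equation*}
u''(z) - \Bigl(z - \tfrac{1}{z} + \tfrac{\sqrt{2}}{2}(2s-t)\Bigr)u'(z) + \frac{4\sqrt{3}\,n^{3/2}}{9z}u(z) = 0,
\end{equation*}
which is exactly \eqref{6.1} with $\gamma=-1$, $\delta = \sqrt{2}(2s-t)/2$, $\alpha=0$, $q=-4\sqrt{3}\,n^{3/2}/9$. The main technical obstacle is bookkeeping: one must verify that the subleading contributions (the $+2n$ inside $\sum A_j$, the $O(\sqrt{n}/z)$ piece from $-2r_n/[(x-s)(R_n+2(x-s))]$, and the $O(z/R_n)$ corrections from expanding $1/(R_n+2(x-s))$) are all genuinely $o(n^{3/2}/z)$ uniformly on compact $z$-intervals away from the origin, so that the biconfluent Heun equation is truly the limit ODE satisfied by $u(z)$.
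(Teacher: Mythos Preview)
Your proposal is correct and follows essentially the same route as the paper: specialize the second-order ODE \eqref{2.555} to the explicit $A_n,B_n$, insert the large-$n$ asymptotics of $R_n$ and $r_n$, and change variable $x=z/\sqrt{2}+s$. The only notable difference is in how $\sum_{j=0}^{n-1}R_j$ is handled: the paper uses the exact algebraic identity $\sum_{j=0}^{n-1}R_j=(r_n+n)R_n+2r_n^2/R_n-(2s-t)r_n$ (your parenthetical ``equivalently \eqref{2.6b}'' option), which needs only the asymptotics of $R_n$ and $r_n$ at the single index $n$, whereas your primary suggestion via \eqref{4.3} and $\alpha_j\sim\sqrt{6j}/3$ requires a uniform-in-$j$ Riemann-sum estimate---the paper's choice is the cleaner of the two.
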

\begin{proof}
Inserting \eqref{2.6a} and \eqref{2.6c} into \eqref{2.6b} to eliminate $R_{n-1}$ and $\beta_{n}$ yields
\begin{equation}\label{6.2}
\begin{aligned}
\sum_{j=0}^{n-1}R_j=(r_{n}+n)R_n+\frac{2r^2_n}{R_n}-(2s-t)r_n.
\end{aligned}
\end{equation}
Substituting \eqref{2.2a}, \eqref{2.2b} into \eqref{2.555}, and using \eqref{6.2} to eliminate $\sum_{j=0}^{n-1}R_j$, we have
\begin{equation}\label{6.3}
\begin{aligned}
&P''_{n}(x)-\bigg(2 x-t-\frac{R_n}{(x-s)(2 x-2s+R_n)}\bigg)P_{n}'(x)+\bigg[-\frac{r_{n}}{(z-s)^2}\\
&~~~~~~~~~~~~+\frac{r_{n}R_{n}}{(x-s)^2(2 x-2s+R_n)}+2 n+\frac{(r_{n}+n)R^2_n+2r^2_n-(2s-t)r_nR_n}{(x-s)R_n}\bigg]P_{n}(x)=0.
\end{aligned}
\end{equation}
Based on the expansions $\alpha_{n}$ and $\beta_{n}$ in \eqref{5.9} into \eqref{5.10}, we can obtain the expansions $R_{n}$ and $r_{n}$ by using \eqref{2.5b} and \eqref{2.6c}
\begin{equation}\label{6.4}
\begin{aligned}
&R_n(x ; s, t)=\frac{2\sqrt{6n}}{3}+\frac{2s}{3}-\frac{5t}{6}+\mathcal{O}(n^{-\frac{1}{2}}), \\
&r_n(x ; s, t)=-\frac{2n}{3}+\frac{2\sqrt{6}(2s-t)\sqrt{n}}{18}+\frac{2s}{3}+\frac{(2s-t)^2}{36}+\mathcal{O}(n^{-\frac{1}{2}}).
\end{aligned}
\end{equation}
Plugging \eqref{6.4} into \eqref{6.3}, we have
\begin{equation}\label{6.3}
\begin{aligned}
&P''_{n}(x)-\bigg(2 x-t-\frac{1}{x-s}\bigg)P_{n}'(x)+\frac{4\sqrt{6}n^{\frac{3}{2}}}{9(x-s)}P_{n}(x)=0.
\end{aligned}
\end{equation}
Letting
\begin{equation*}
\begin{aligned}
x=\frac{z}{\sqrt{2}}+s,
\end{aligned}
\end{equation*}
then $u(z):=P_{n}(\frac{z}{\sqrt{2}}+s)$ satisfies the biconfluent Heun equation \eqref{6.1} with parameters  $\gamma=-1, \delta=\frac{\sqrt{2}}{2}(2s- t), \alpha=0, q=-\frac{4 \sqrt{3} n^{\frac{3}{2}}}{9}$.
\end{proof}

\begin{remark}
The Heun equation and its four standard confluent forms is very important in mathematical physics. The four standard confluent Heun equations are confluent Heun equation
, doubly confluent Heun equation, biconfluent Heun equation and triconfluent Heun equation \cite{Ronveaux}. The solutions of these Heun equations are special functions, such as  hypergeometric functions, Mathieu functions and spheroidal functions.
\end{remark}

\section{Acknowledgements}

M. Zhu acknowledges the support of the National Natural Science Foundation of China (Grant No. 12201333) and the Breeding Plan of Shandong Provincial Qingchuang Research Team (Grant No. 2023KJ135).

\section{Data availability}
The data that support the findings of this study are available from the corresponding author upon reasonable request.

\end{document}